\documentclass[a4paper]{amsart}
\usepackage{a4wide}

\usepackage{graphicx, amsthm, amsmath, amssymb, mathrsfs, amscd}
\usepackage{amsmath}
\usepackage{breakurl}
\usepackage[tableposition=below]{caption}
\usepackage{enumitem}
\usepackage{hyperref}
\usepackage{color}
\usepackage{color, colortbl}
\usepackage{tikz-cd}
\usepackage{stmaryrd}

\newtheorem{theorem}{Theorem}[section]

\newtheorem{cor}[theorem]{Corollary}

\theoremstyle{definition}

\theoremstyle{definition}

\title{Algebraic Geometry over Non-Algebraically Closed Fields:\\
A-Coherent Sheaves over a Ringed Space}

\author[Hamet Seydi]{Hamet Seydi}
\address{Emeritus Professor, Department of Mathematics\\
Cheikh Anta Diop University\\
Dakar\\
Senegal.}
\email{hsedi@gmail.com}

\author[Teylama Miabey]{Teylama Miabey}
\address{Department of Mathematics\\
University of the District of Columbia\\
Washington DC\\
United States.}
\email{teylama.miabey@udc.edu}

\keywords{Algebraic geometry; coherent sheaves; flatness; Nash functions; category equivalence}
\subjclass[2020]{14F05, 14A10, 13B40, 32C07}

\begin{document}

\begin{abstract}
In this paper, we investigate the properties of $A$-coherent and $A$-quasi-coherent sheaves within the framework of algebraic geometry over non-algebraically closed fields. We define an $\mathcal{O}_X$-module to be $A$-coherent (resp. $A$-quasi-coherent) if it admits a global presentation by free modules of finite rank (resp. arbitrary rank) over a ringed space $X$. We establish a fundamental correspondence between these sheaves and modules over the ring of global sections $A = \Gamma(X,\mathcal{O}_X)$. Specifically, we prove that under conditions of flatness for the canonical morphism and the exactness of the global section functor, there exists an equivalence of categories between $A$-coherent $\mathcal{O}_X$-modules and finitely presented modules over $A$. We further demonstrate the utility of these results by proving the faithful flatness of the canonical homomorphisms from rings of Nash functions to rings of analytic functions, utilizing the vanishing of higher cohomology groups as guaranteed by Cartan's Theorem~B.
\end{abstract}
\medskip

\maketitle

\section{Introduction and Background}

The study of sheaves on ringed spaces traditionally relies on the assumption of an algebraically closed base field. The foundational work of Serre~\cite{SerreFAC} in \emph{Faisceaux Alg\'ebriques Coh\'erents} (FAC) established the equivalence between the category of quasi-coherent sheaves and modules over the coordinate ring for affine schemes. However, extending this correspondence to real algebraic geometry and non-algebraically closed fields introduces significant complexities.

A primary tool in bridging local and global data is Cartan's Theorem~B~\cite{Cartan}, which guarantees the vanishing of higher sheaf cohomology $(H^i = 0$ for $i>0)$ for coherent analytic sheaves on Stein manifolds. In the real case, the development of Nash functions---functions that are both algebraic and analytic---provides a setting where ``Stein-like'' properties often hold. The relationship between the ring of Nash functions and the ring of analytic functions was significantly advanced by Raimondo~\cite{Raimondo}, who established the faithful flatness of the inclusion.

In this paper, we define $A$-coherence, a global property that ensures a sheaf is the pullback of a finitely presented module from the spectrum of the global section ring. By leveraging the cohomological vanishing provided by the Cartan--Serre framework, we establish a robust categorical equivalence for $A$-coherent sheaves.

\section{Main Results}

Let $(X, \mathcal{O}_X)$ be a ringed space and $F$ an $\mathcal{O}_X$-module.

\begin{enumerate}
    \item We say that $F$ is \emph{$A$\nobreakdash-coherent} if there exists an exact sequence
    \[
    \mathcal{O}_X^{m} \longrightarrow \mathcal{O}_X^{n} \longrightarrow F \longrightarrow 0
    \]
    where $m$ and $n$ are integers $\geq 1$.
    
    \item We say that $F$ is \emph{$A$\nobreakdash-quasi-coherent} if there exists an exact sequence
    \[
    \mathcal{O}_X^{I_1} \longrightarrow \mathcal{O}_X^{I_2} \longrightarrow F \longrightarrow 0
    \]
    where $I_1$ and $I_2$ are sets.
\end{enumerate}

We start with the following result:

\begin{theorem}[I.1]
Let $(X, \mathcal{O}_X)$ be a locally ringed space, $Y = \mathrm{Spec}(\Gamma(X, \mathcal{O}_X))$, and $F$ an $\mathcal{O}_X$-module that is $A$\nobreakdash-coherent (resp. $A$\nobreakdash-quasi-coherent). Then there exists an $\mathcal{O}_Y$-module of finite presentation (resp. quasi-coherent) $G$ such that $F \cong \varphi^{\ast} G$, where $\varphi : X \to Y$ is the canonical morphism.

Conversely, if $G$ is an $\mathcal{O}_Y$-module of finite presentation (resp. quasi-coherent), then $F = \varphi^{\ast} G$ is an $\mathcal{O}_X$-module that is $A$\nobreakdash-coherent (resp. $A$\nobreakdash-quasi-coherent).
\end{theorem}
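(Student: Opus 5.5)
The plan is to exploit three facts about the canonical morphism $\varphi : X \to Y$, $Y = \mathrm{Spec}(A)$, $A = \Gamma(X,\mathcal{O}_X)$: $\varphi^{\ast}$ is right exact, $\varphi^{\ast}$ commutes with arbitrary direct sums, and $\varphi^{\ast}\mathcal{O}_Y = \mathcal{O}_X$. Together they give $\varphi^{\ast}\mathcal{O}_Y^{I} \cong \mathcal{O}_X^{I}$ for every set $I$. The key point is that a morphism $\mathcal{O}_X^{I_1} \to \mathcal{O}_X^{I_2}$ is determined by global data coming from $A$, and therefore descends to $Y$.

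First recall how $\varphi$ is constructed. For a locally ringed space there is a natural bijection between morphisms of locally ringed spaces $X \to \mathrm{Spec}(A)$ and ring maps $A \to \Gamma(X,\mathcal{O}_X)$. We take $\varphi$ to correspond to the identity map of $A$. In particular, the map $A = \Gamma(Y,\mathcal{O}_Y) \to \Gamma(X,\mathcal{O}_X)$ induced by $\varphi^{\#}$ is the identity.

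\textbf{Step 1: describe morphisms between free modules.} We have
\[
\mathrm{Hom}_{\mathcal{O}_X}(\mathcal{O}_X^{I_1},\mathcal{O}_X^{I_2}) = \prod_{i\in I_1}\Gamma\bigl(X,\mathcal{O}_X^{I_2}\bigr).
\]
Here $\mathcal{O}_X^{I}$ means the direct sum sheaf. For finite $I_2$ this is simply the set of $I_1\times I_2$ matrices with entries in $A$. For infinite $I_2$ it needs care, because global sections of an infinite direct sum are not just finitely supported families. \emph{This is the main obstacle.} A global section of $\bigoplus_{I_2}\mathcal{O}_X$ is only locally a finite sum, so a column need not be a finitely supported vector in $A^{(I_2)}$. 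In that case the morphism does not obviously come from $A^{(I_1)} \to A^{(I_2)}$.

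I would handle this in one of two ways:
\begin{enumerate}
\item interpret $\mathcal{O}_X^{I}$ as $\varphi^{\ast}$ of $\widetilde{A^{(I)}}$, or equivalently as the sheaf associated to $U\mapsto \mathcal{O}_X(U)^{(I)}$, and check that the global sections realising the given map can be chosen finitely supported; or
\item reduce to the case where $X$ is quasi-compact, so that global sections of the direct sum are finitely supported.
\end{enumerate}
I would try the first approach first. If it fails, I would state the quasi-coherent case with that restriction. In the $A$-coherent case there is no issue: a morphism $u:\mathcal{O}_X^m\to\mathcal{O}_X^n$ is a matrix $M\in M_{n\times m}(A)$.

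\textbf{Step 2: descend the presentation.} Let $v : A^{m} \to A^{n}$ be the $A$-linear map given by $M$, with cokernel $N$, and set $G = \widetilde{N}$. Since $\widetilde{\ }$ is exact, $G$ is of finite presentation, with presentation $\mathcal{O}_Y^m \xrightarrow{\tilde v} \mathcal{O}_Y^n \to G \to 0$. Applying the right exact functor $\varphi^{\ast}$ gives an exact sequence
\[
\mathcal{O}_X^{m}\xrightarrow{\varphi^{\ast}\tilde v}\mathcal{O}_X^{n}\to \varphi^{\ast}G\to 0.
\]
On global generators, $\varphi^{\ast}\tilde v$ is the matrix $M$ viewed through $A\to\Gamma(X,\mathcal{O}_X)$, which is the identity. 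Hence $\varphi^{\ast}\tilde v = u$, and comparing cokernels gives $F\cong\varphi^{\ast}G$. The quasi-coherent case is identical, with $N=\mathrm{coker}(A^{(I_1)}\to A^{(I_2)})$ and $G=\widetilde N$ quasi-coherent.

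\textbf{Step 3: the converse.} Let $G$ be of finite presentation on the affine scheme $Y$. Then $G=\widetilde N$ with $A^m\to A^n\to N\to 0$ exact, which gives a global presentation $\mathcal{O}_Y^m\to\mathcal{O}_Y^n\to G\to 0$. Pulling back by $\varphi^{\ast}$, which is right exact, shows that $\varphi^{\ast}G$ is $A$-coherent. If $G$ is quasi-coherent, it is still $\widetilde N$ because $Y$ is affine. Choosing a free presentation $A^{(I_1)}\to A^{(I_2)}\to N\to0$ and using that $\varphi^{\ast}$ commutes with direct sums shows that $\varphi^{\ast}G$ is $A$-quasi-coherent. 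Finally, the degenerate cases $m=0$ or $n=0$ allowed on $Y$ can be absorbed by adding a zero map or an extra free summand, so that $m,n\ge 1$ as the definition requires.
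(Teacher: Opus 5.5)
Your $A$-coherent case is exactly the paper's argument and is correct. There $u$ is a matrix over $A=\Gamma(X,\mathcal{O}_X)$, it descends to $v\colon\mathcal{O}_Y^m\to\mathcal{O}_Y^n$ with $\varphi^*v=u$, and right exactness of $\varphi^*$ gives $F\cong\varphi^*\widetilde{\mathrm{coker}\,v}$. The converse direction, including your remark about forcing $m,n\ge 1$, is also fine.

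For the $A$-quasi-coherent case there is a genuine gap: the obstacle you flag is never closed. The columns $u(e_i)\in\Gamma(X,\mathcal{O}_X^{(I_2)})$ are determined by $u$, so option (1), choosing them finitely supported, is not available. For example, take $X=\coprod_{k\in\mathbb{N}}\{x_k\}$ with each stalk a field $K$, so that $A=K^{\mathbb{N}}$. Let $u\colon\mathcal{O}_X\to\mathcal{O}_X^{(\mathbb{N})}$ send $1$ to the section equal to $e_k$ at $x_k$. The $j$-th coordinate of $u(1)$ is the idempotent $\delta_j\in A$ supported at $x_j$, so $u(1)\notin A^{(\mathbb{N})}$.

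For any $v\colon\mathcal{O}_Y\to\mathcal{O}_Y^{(\mathbb{N})}$, however, $(\varphi^*v)(1)$ is the image of $v(1)\in\Gamma(Y,\mathcal{O}_Y^{(\mathbb{N})})=A^{(\mathbb{N})}$, which is finitely supported. Hence $u$ is not of the form $\varphi^*v$, and your claim that ``the quasi-coherent case is identical'' fails. Option (2) only proves a weaker theorem, for quasi-compact $X$, where indeed $\Gamma(X,\mathcal{O}_X^{(I)})=A^{(I)}$.

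The missing idea is to change the presentation rather than descend $u$. In the example, the columns $\delta_k e_k\in A^{(\mathbb{N})}$ generate the same image subsheaf as $u(1)$. So $F\cong\varphi^*\widetilde N$ with $N=\mathrm{coker}\bigl(A^{(\mathbb{N})}\to A^{(\mathbb{N})},\ e_k\mapsto\delta_k e_k\bigr)$. More generally, suppose $A$ has a locally finite partition of unity $(\chi_\lambda)$ subordinate to a cover on which each $u(e_i)$ is finitely supported. Then the columns $\chi_\lambda u(e_i)\in A^{(I_2)}$ have the same image as $u$, and they descend. Without some such device, or an added hypothesis, the quasi-coherent half of the statement is not proved.

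The paper's proof does not supply this either. It writes $\Gamma(u)$ as a map $\Gamma(X,\mathcal{O}_X)^{I_1}\to\Gamma(X,\mathcal{O}_X)^{I_2}$ and asserts $\varphi^*v=u$. That silently assumes the identification $\Gamma(X,\mathcal{O}_X^{(I)})=\Gamma(X,\mathcal{O}_X)^{(I)}$, which you correctly singled out as the crux.
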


\begin{proof}
By hypothesis, we have an exact sequence
\[
\mathcal{O}_X^{I_1} \xrightarrow{u} \mathcal{O}_X^{I_2} \longrightarrow F \longrightarrow 0
\]
where $I_1$ and $I_2$ are finite if $F$ is $A$\nobreakdash-coherent.  

Set
\[
M = \mathrm{coker}\big( \Gamma(X, \mathcal{O}_X)^{I_1} \xrightarrow{\Gamma(u)} \Gamma(X, \mathcal{O}_X)^{I_2} \big)
\]
and let $G = \widetilde{M}$ be the $\mathcal{O}_Y$-module associated to $M$.

Then we have an exact sequence
\[
\mathcal{O}_Y^{I_1} \xrightarrow{v} \mathcal{O}_Y^{I_2} \longrightarrow G \longrightarrow 0
\]
where $v = \Gamma(X, u)$. This induces the exact sequence
\[
\mathcal{O}_X^{I_1} \xrightarrow{\varphi^{\ast} v} \mathcal{O}_X^{I_2} \longrightarrow \varphi^{\ast} G \longrightarrow 0.
\]
But $\varphi^{\ast} v$ identifies with $u$, hence the canonical morphism $\varphi^{\ast} G \to F$ is an isomorphism.
\end{proof}


\noindent
Conversely, if $G$ is a finitely presented (resp. quasi–coherent) $\mathcal{O}_Y$–module, it is clear that $F = \varphi^* G$ is an $\mathcal{O}_X$–module $A$–coherent (resp. $A$–quasi–coherent).
\qed

\begin{theorem}[I.2]
Under the same notations and assumptions as in (I.1), suppose the following conditions hold:
\begin{enumerate}
\item The ring $\Gamma(X, \mathcal{O}_X)$ is Noetherian (resp. $\mathcal{O}_X$ is coherent).
\item $\varphi$ is a flat morphism.
\item The functor $F \mapsto F(X, F)$ from the category of $\mathcal{O}_X$–modules $A$–coherent (resp. $\mathcal{O}_X$–modules coherent) to the category of $\Gamma(X, \mathcal{O}_X)$–modules is exact.
\end{enumerate}
Then the functor $G \mapsto \varphi^* G$ from the category of $\mathcal{O}_Y$–modules coherent (resp. $\mathcal{O}_Y$–modules finitely presented) to the category of $\mathcal{O}_X$–modules $A$–coherent is an equivalence of categories.
\end{theorem}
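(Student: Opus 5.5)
The plan is to show that $G \mapsto \varphi^{*}G$ is essentially surjective and fully faithful, with $A = \Gamma(X,\mathcal{O}_X)$ and $Y = \mathrm{Spec}(A)$. The quasi-inverse should be $F \mapsto \widetilde{\Gamma(X,F)}$.

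\emph{Essential surjectivity.} This is exactly Theorem~I.1. Given an $A$-coherent $F$ with a presentation $\mathcal{O}_X^{m} \xrightarrow{u} \mathcal{O}_X^{n} \to F \to 0$, we obtain $F \cong \varphi^{*}\widetilde{M}$ with $M = \mathrm{coker}(\Gamma(u))$, and $M$ is finitely presented. When $A$ is Noetherian, finitely presented and coherent $\mathcal{O}_Y$-modules coincide, so $\widetilde{M}$ lies in the source category. In the parenthetical variant, where $\mathcal{O}_X$ is coherent, $\varphi^{*}$ of a finitely presented module is again finitely presented over the coherent sheaf $\mathcal{O}_X$, hence coherent. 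Either way, Theorem~I.1 also shows that $\varphi^{*}$ lands in the target category.

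\emph{The key comparison: $\Gamma(X,\varphi^{*}\widetilde{M}) \cong M$.} I will prove this for every finitely presented $A$-module $M$, via the canonical map $\alpha_M : M \to \Gamma(X,\varphi^{*}\widetilde{M})$.
\begin{enumerate}
\item For $M = A^{n}$ the map $\alpha_M$ is the identity, since $\varphi^{*}\mathcal{O}_Y^{n} = \mathcal{O}_X^{n}$.
\item Take a presentation $A^{m} \to A^{n} \to M \to 0$. Since $\varphi^{*}$ is right exact, we get an exact sequence of $A$-coherent modules $\mathcal{O}_X^{m} \to \mathcal{O}_X^{n} \to \varphi^{*}\widetilde{M} \to 0$.
\item To apply hypothesis~(3) I need short exact sequences inside the category, so I split this one through the image. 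Here flatness of $\varphi$, hypothesis~(2), is what keeps things in the category: $\varphi^{*}$ is exact, so the image of $\mathcal{O}_X^{m} \to \mathcal{O}_X^{n}$ is $\varphi^{*}$ of the image $K \subset A^{n}$. When $A$ is Noetherian, $K$ is finitely presented, so this image is again $A$-coherent by Theorem~I.1.
\item Hypothesis~(3) then gives exactness of $A^{m} \to A^{n} \to \Gamma(X,\varphi^{*}\widetilde{M}) \to 0$.
\item The five lemma, applied to the two presentations, shows $\alpha_M$ is an isomorphism.
\end{enumerate}

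\emph{Full faithfulness.} For finitely presented $M,N$, consider $\mathrm{Hom}_A(M,N) \to \mathrm{Hom}_{\mathcal{O}_X}(\varphi^{*}\widetilde{M},\varphi^{*}\widetilde{N})$. I compare both sides through a presentation of $M$; both are left exact in $M$. By the adjunction $\varphi^{*} \dashv \varphi_{*}$, the right side equals $\mathrm{Hom}_{\mathcal{O}_Y}(\widetilde{M},\varphi_{*}\varphi^{*}\widetilde{N})$. This reduces the check to $M = A$. In that case the map is $N \to \Gamma(X,\varphi^{*}\widetilde{N})$, which is an isomorphism by the comparison step. A standard left-exactness and five-lemma argument then gives full faithfulness in general.

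The main obstacle is the comparison step, specifically keeping the kernels and images that arise inside the category where exactness of $\Gamma$ is assumed. That is precisely where flatness of $\varphi$ and the Noetherian (resp. coherence) hypothesis must be used together. My first attempt will be to factor every right-exact sequence into short exact sequences of pullbacks of finitely presented modules, as in the list above.
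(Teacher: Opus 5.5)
Your proposal is correct and follows essentially the paper's route: use flatness of $\varphi$ to identify $\ker(u)$ and $\mathrm{Im}(u)$ with pullbacks of finitely presented modules (in the coherent variant they are simply coherent), apply hypothesis (3) to the two resulting short exact sequences to get $\Gamma(X,F)=\mathrm{coker}\,\Gamma(u)$, and conclude with Theorem I.1. Your adjunction argument spells out the full faithfulness that the paper only calls ``clear''. Step 3 must also place $\ker(u)$ in the category, as your closing paragraph indicates, because surjectivity of $A^{m}\to\Gamma(X,\mathrm{Im}(u))$ depends on it.
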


\begin{proof}
Let $F$ be an $\mathcal{O}_X$–module $A$–coherent. Then there exists an exact sequence
\[
\mathcal{O}_X^m \xrightarrow{u} \mathcal{O}_X^n \longrightarrow F \longrightarrow 0
\]
where $u = \varphi^* v$ for some morphism $v$ of $\mathcal{O}_Y$–modules $\mathcal{O}_Y^m \to \mathcal{O}_Y^n$.  

We deduce that
\[
\ker(u) = \varphi^*(\ker(v)), \quad \mathrm{Im}(u) = \varphi^*(\mathrm{Im}(v)),
\]
since $\varphi$ is flat. As $\ker(u)$ and $\mathrm{Im}(u)$ are coherent if $\mathcal{O}_X$ is coherent, we deduce in both cases that the sequences
\[
0 \to \Gamma(X, \ker(u)) \to \Gamma(X, \mathcal{O}_X^m) \to \Gamma(X, \mathrm{Im}(u)) \to 0
\]
and
\[
0 \to \Gamma(X, \mathrm{Im}(u)) \to \Gamma(X, \mathcal{O}_X^n) \to \Gamma(X, F)
\]
are exact. We thus conclude that the sequence
\[
\Gamma(X, \mathcal{O}_X^m) \to \Gamma(X, \mathcal{O}_X^n) \to \Gamma(X, F)
\]
is exact.

From the proof of (I.1), if $G = F(X, F)^\sim$, then $F \cong \varphi^* G$. Moreover, if $F_1 \to F_2$ is a morphism of $\mathcal{O}_X$–modules $A$–coherent, it is clear that $u = \varphi^* f$ where $\Gamma(X, u)^\sim$, hence the conclusion.
\end{proof}

\begin{cor}[I.2.1]
Under the same notations and assumptions as in Theorem (I.2), the functor
\[
F \longmapsto \varphi^* F
\]
from the category of quasi–coherent $\mathcal{O}_Y$–modules to the category of $A$–quasi–coherent $\mathcal{O}_X$–modules is an equivalence of categories.
\end{cor}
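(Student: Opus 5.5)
The plan is to mirror the proof of Theorem (I.2), replacing finite free modules by arbitrary free modules $\mathcal{O}_X^{I}$ and using Theorem (I.1) in its quasi-coherent version. Essential surjectivity is already given by (I.1): every $A$-quasi-coherent $F$ is of the form $\varphi^*G$ with $G=\widetilde{M}$ quasi-coherent on $Y$, where $M=\mathrm{coker}(\Gamma(u))$ for a presentation $\mathcal{O}_X^{I_1}\xrightarrow{u}\mathcal{O}_X^{I_2}\to F\to 0$. Conversely, $\varphi^*$ sends quasi-coherent $\mathcal{O}_Y$-modules to $A$-quasi-coherent ones. So the work lies in full faithfulness. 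For that I will show that the unit $G\to \varphi_*\varphi^*G$, equivalently $M\to\Gamma(X,\varphi^*\widetilde M)$, is an isomorphism for every $A$-module $M$. Once this holds, $\mathrm{Hom}(\varphi^*\widetilde M,\varphi^*\widetilde N)=\mathrm{Hom}(\widetilde M,\varphi_*\varphi^*\widetilde N)=\mathrm{Hom}_A(M,\Gamma(X,\varphi^*\widetilde N))=\mathrm{Hom}_A(M,N)$ by adjunction and the affine equivalence on $Y$.

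To prove $M\cong\Gamma(X,\varphi^*\widetilde M)$, I will first treat free modules. Here the claim is $\Gamma(X,\mathcal{O}_X^{(I)})=A^{(I)}$, which requires $\Gamma(X,-)$ to commute with direct sums. Next I will write $M$ as a filtered colimit of finitely presented modules $M_\lambda$. The functor $\varphi^*$ commutes with colimits, and by Theorem (I.2) $\Gamma(X,\varphi^*\widetilde{M_\lambda})=M_\lambda$. So it suffices that $\Gamma(X,-)$ commutes with filtered colimits of the sheaves $\varphi^*\widetilde{M_\lambda}$. Alternatively, I can argue directly with a free presentation $A^{(I_1)}\to A^{(I_2)}\to M\to 0$: pulling back and using flatness of $\varphi$ as in (I.2), kernels and images are preserved. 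Hypothesis (3), extended to $A$-quasi-coherent modules, then gives exactness of the global sections of $\mathcal{O}_X^{(I_1)}\to\mathcal{O}_X^{(I_2)}\to\varphi^*\widetilde M\to 0$, hence $\Gamma(X,\varphi^*\widetilde M)=M$.

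The main obstacle is that hypotheses (1)--(3) are stated only for finite presentations, and $\Gamma(X,-)$ need not commute with infinite direct sums or filtered colimits on a general ringed space. I would first try to reduce to the finite case via filtered colimits. This works if $X$ is quasi-compact, so that $\Gamma(X,-)$ commutes with filtered colimits, at least when $X$ is also quasi-separated or the colimit sheaves are sufficiently acyclic. If that is not available, I would state explicitly that the proof uses the extension of hypothesis (3) to the $A$-quasi-coherent category, together with compatibility of $\Gamma(X,-)$ with direct sums, and verify these in the intended applications (Stein/Nash settings via Theorem B). One also has to interpret $\mathcal{O}_X^{I}$ as the direct sum $\mathcal{O}_X^{(I)}$, so that (I.1) applies.
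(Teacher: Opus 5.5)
Your reduction is the right one and matches the skeleton of the paper's proof: essential surjectivity from (I.1), and then, since $\mathrm{Hom}_{\mathcal O_X}(\varphi^*\widetilde M,\varphi^*\widetilde N)=\mathrm{Hom}_A(M,\Gamma(X,\varphi^*\widetilde N))$, full faithfulness is \emph{equivalent} to the unit $N\to\Gamma(X,\varphi^*\widetilde N)$ being bijective for every $A$-module $N$ (take $M=A$). \textbf{The gap is that you never prove this for $N$ not finitely generated.} You postpone it to extra hypotheses, and the gap cannot be closed from (1)--(3): the statement is false as written. The paper's ``it is clear'' hides the same hole.

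Here is a counterexample. Let $A$ be a discrete valuation ring with uniformizer $t$ and fraction field $K$. Let $X=\mathbb N$ with open sets $\emptyset$, $\mathbb N$ and $D_n=\{0,\dots,n\}$. Put $\mathcal O_X(D_n)=A[x_n]/(x_n^2)$, with restriction $D_{n+1}\to D_n$ given by the $A$-algebra map $x_{n+1}\mapsto tx_n$, and put $\mathcal O_X(\mathbb N)=\varprojlim_n\mathcal O_X(D_n)$. Note that $X$ is not quasi-compact.
\begin{itemize}
\item Since $D_n$ is the smallest open neighbourhood of $n$, this is a sheaf whose stalks $A[x]/(x^2)$ are local and free over $A$. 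Hence $\varphi$ is flat and $\varphi^*$ is faithfully exact.
\item For every $A$-module $N$ one gets $\Gamma(X,\varphi^*\widetilde N)=\varprojlim_n(N\oplus Nx_n)$, with transitions $(a,b)\mapsto(a,tb)$.
\item If $N$ is finitely generated, the second components lie in $\bigcap_k t^kN=0$. So $\Gamma(X,\mathcal O_X)=A$ and $\Gamma(X,\varphi^*\widetilde N)=N$. With faithful exactness of $\varphi^*$, this gives (1)--(3).
\item However, $\Gamma(X,\varphi^*\widetilde K)=K\oplus K$, since one may take $b_n=t^{-n}b_0$. Thus $\mathrm{Hom}_{\mathcal O_Y}(\mathcal O_Y,\widetilde K)=K\to\mathrm{Hom}_{\mathcal O_X}(\mathcal O_X,\varphi^*\widetilde K)$ is not surjective.
\end{itemize}

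What your argument does prove is the corollary with an added hypothesis making $\Gamma(X,-)$ commute with filtered colimits, for example $X$ quasi-compact and quasi-separated. Under that hypothesis your colimit route via $\Gamma(X,\varphi^*\widetilde{M_\lambda})=M_\lambda$ is complete.

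One refinement: compatibility with direct sums, which you list as a further assumption, actually follows from (1)--(3).
\begin{itemize}
\item The argument of (I.2) gives $\Gamma(X,\varphi^*\widetilde M)=M$ for finitely generated $M$. Taking $M=A/\mathfrak m$ shows $\mathrm{Max}(A)\subset\varphi(X)$.
\item The stalk maps $A_{\varphi(x)}\to\mathcal O_{X,x}$ are flat and local, hence injective. So a nonzero $s\in A$ has nonzero germ at any point lying over a maximal ideal containing some $P\in\mathrm{Ass}(A)$ with $s\neq 0$ in $A_P$.
\item Since $\mathrm{Ass}(A)$ is finite, a locally finite family in $A$ has only finitely many nonzero terms, i.e.\ $\Gamma(X,\mathcal O_X^{(I)})=A^{(I)}$.
\end{itemize}
This is also what makes your appeal to (I.1) legitimate here. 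What genuinely fails is compatibility of $\Gamma$ with filtered unions such as $K=\bigcup_k t^{-k}A$, or equivalently right exactness of $\Gamma$ on infinite presentations.
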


\begin{proof}
Indeed, from (I.2), any $A$–quasi–coherent $\mathcal{O}_X$–module $F'$ is isomorphic to $\varphi^* F$ where $F = \Gamma(X, F')^\sim$ is a quasi–coherent $\mathcal{O}_Y$–module.  

It is clear that if $G'$ is another $A$–quasi–coherent $\mathcal{O}_X$–module and $G = \Gamma(X, G')^\sim$, then the map
\[
f \mapsto \varphi^* f
\]
from $\mathrm{Hom}_{\mathcal{O}_Y}(F, G)$ to $\mathrm{Hom}_{\mathcal{O}_X}(F', G')$ is bijective, hence the conclusion.
\end{proof}

\begin{cor}[I.2.2]
With the same notations as (I.2) and hypotheses 1) and 2) of (I.2), the following are equivalent:
\begin{enumerate}
\item[(3)] $\varphi(X)$ contains $\mathrm{Max}(Y)$.
\item[(i)] The functor $F' \mapsto \Gamma(X, F')$ from the category of $\mathcal{O}_X$–modules $A$–coherent to the category of $\Gamma(X, \mathcal{O}_X)$–modules is exact.
\item[(ii)] The map $\Gamma(Y, F) \to \Gamma(X, \varphi^* F)$ is an isomorphism for every finitely presented $\mathcal{O}_Y$–module $F$.
\end{enumerate}
\end{cor}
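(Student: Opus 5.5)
Write $A=\Gamma(X,\mathcal{O}_X)$. Under hypotheses 1) and 2), $\varphi$ is flat, so $\varphi^*$ is exact on $\mathcal{O}_Y$-modules. For any $\mathcal{O}_Y$-module $F$ we have the adjunction map $\theta_F:\Gamma(Y,F)\to\Gamma(X,\varphi^*F)$. The plan is to prove (ii)$\Rightarrow$(i), then (i)$\Rightarrow$(ii), and finally to connect both to (3) through faithful flatness of $\varphi$ at the closed points.

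\textbf{(ii)$\Rightarrow$(i).} By (I.1), every $A$-coherent module is of the form $\varphi^*F$ with $F$ finitely presented. First I reduce to morphisms of this form. Given a morphism $\varphi^*F_1\to\varphi^*F_2$, apply $\Gamma$ and use (ii) on $F_1$ and $F_2$. This gives a map $M_1\to M_2$ with $M_i=\Gamma(Y,F_i)$, and hence a morphism $f:F_1\to F_2$. Then $\varphi^*f$ recovers the original morphism, because both are determined by the images of global generators of $\mathcal{O}_X^n$.

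Now take an exact sequence $\varphi^*F_1\to\varphi^*F_2\to\varphi^*F_3$ of $A$-coherent modules, which we may assume equals $\varphi^*$ of $F_1\to F_2\to F_3$. To transport exactness back to $Y$, note that the kernel, image and homology of $F_1\to F_2\to F_3$ are coherent when $A$ is Noetherian. In the other case they are finitely presented, because $\mathcal{O}_X$ is coherent and flat pullback commutes with kernels and images, as in the proof of (I.2). Their pullbacks to $X$ vanish wherever the original pullback sequence is exact, and applying (ii) to the homology module $H$ gives $\Gamma(Y,H)=\Gamma(X,\varphi^*H)=0$, so $H=0$. Hence $F_1\to F_2\to F_3$ is exact on $Y$. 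Exactness of $\Gamma$ on the affine scheme $Y$ then gives exactness of $\Gamma(X,\cdot)$, using (ii) once more.

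\textbf{(i)$\Rightarrow$(ii).} Fix a presentation $\mathcal{O}_Y^m\to\mathcal{O}_Y^n\to F\to0$. Pulling back gives the exact sequence $\mathcal{O}_X^m\to\mathcal{O}_X^n\to\varphi^*F\to0$ of $A$-coherent modules. By (i) and the argument in the proof of (I.2), $\Gamma(X,\cdot)$ keeps it right exact. So $\Gamma(X,\varphi^*F)=\mathrm{coker}(A^m\to A^n)=\Gamma(Y,F)$, and $\theta_F$ is this identification.

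\textbf{(3)$\Leftrightarrow$(ii).} If $\varphi(X)\supseteq\mathrm{Max}(Y)$, then $\varphi^*H=0$ implies $H=0$ for every finitely presented $H$. Indeed, if $H_{\mathfrak m}\neq0$, choose $x$ with $\varphi(x)=\mathfrak m$. Nakayama gives $H_{\mathfrak m}\otimes\mathcal{O}_{X,x}\neq0$, since the local map $\mathcal{O}_{Y,\mathfrak m}\to\mathcal{O}_{X,x}$ is local and $H_{\mathfrak m}$ is finitely generated. With this, $\varphi^*$ is faithful and exact, and I would try to deduce injectivity and surjectivity of $\theta_F$ by diagram chasing on a presentation.

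Conversely, assume (ii) and let $\mathfrak m$ be a maximal ideal not in $\varphi(X)$. Take $F=\mathcal{O}_Y/\mathfrak m$. This is coherent in the Noetherian case; in the other case one uses a finitely generated ideal $I\subseteq\mathfrak m$ that generates the unit ideal at every point of $\varphi(X)$. Then $\varphi^*F$ has zero stalks everywhere, so $\Gamma(X,\varphi^*F)=0\neq\Gamma(Y,F)$, contradicting (ii).

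\textbf{Main obstacle.} The hardest step is (3)$\Rightarrow$(ii), namely surjectivity of $\theta_F$. Faithful flatness at the closed points alone does not obviously give global sections, and a priori $\Gamma(X,\mathcal{O}_X)=A$ only for $F=\mathcal{O}_Y$. My first attempt would be to write $F$ as the cokernel of $\mathcal{O}_Y^m\to\mathcal{O}_Y^n$ with kernel $K$, a module of the same type. Faithful exactness then identifies $\Gamma(X,\varphi^*K)$ via an induction along a resolution, using the Noetherian or coherence hypothesis to keep every term of the correct type. The surjectivity of $A^n\to\Gamma(X,\varphi^*F)$ would then reduce to the vanishing of an $H^1$-type obstruction, which I expect must be extracted from (3) by a local-to-global argument.
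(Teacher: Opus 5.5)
Your (i)$\Rightarrow$(ii) is the paper's own argument (its step labelled ``(1)$\Rightarrow$(ii)''). Your (ii)$\Rightarrow$(i) is more complete than the paper's, which only records left exactness, a property $\Gamma(X,-)$ always has. One repair is needed there. In the coherent case you cannot deduce that the homology $H$ on $Y$ is finitely presented from coherence of $\mathcal{O}_X$, since that hypothesis concerns modules on $X$. You do not need it: left exactness is free, and for surjectivity only $C=\mathrm{coker}(F_2\to F_3)$ matters. That cokernel is finitely presented with $\varphi^*C=0$, so (ii) forces $C=0$.

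The genuine gap is condition (3). You leave (3)$\Rightarrow$(ii) open, and it cannot be closed, because it is false under hypotheses 1) and 2). Take the flat family $\varphi\colon X\to\mathrm{Spec}\,k[t]$ of twisted cubics degenerating to a plane nodal cubic $C$ with an embedded point $P$ at the node (Hartshorne, Example III.9.8.4).
\begin{itemize}
\item Since $\varphi$ is flat and projective with geometrically integral generic fibre, $\Gamma(X,\mathcal{O}_X)$ is a finite $k[t]$-subalgebra of $k(t)$. By normality it equals $k[t]$.
\item So $\varphi$ is the canonical morphism, it is flat and surjective, and 1), 2) and (3) all hold.
\item Yet for $F=(k[t]/t)^\sim$, the sequence $0\to k_P\to\mathcal{O}_{X_0}\to\mathcal{O}_C\to 0$ gives $\dim_k\Gamma(X,\varphi^*F)=h^0(\mathcal{O}_{X_0})=2$, while $\Gamma(Y,F)=k$.
\item The cokernel of $\theta_F$ is the $t$-torsion of $H^1(X,\mathcal{O}_X)$. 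This is exactly your ``$H^1$-type obstruction''; faithful flatness at closed points gives only injectivity of $\theta_F$.
\end{itemize}

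In the other direction, your $\mathcal{O}_Y/\mathfrak{m}$ argument for (ii)$\Rightarrow$(3) is correct in the Noetherian case. In the coherent case, the finitely generated ideal $I$ you invoke need not exist, and the implication fails. Let $U$ be the punctured spectrum of $k[x_1,x_2,\dots]$. It has coherent structure sheaf and $\Gamma(U,\mathcal{O}_U)=k[x_1,x_2,\dots]$. It satisfies (ii): every finitely presented module is extended from finitely many variables, and the remaining variables act regularly. But $U$ misses the maximal ideal $(x_1,x_2,\dots)$.

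So only (i)$\Leftrightarrow$(ii) is provable, and that is all the paper's proof establishes. The paper never argues anything about (3), which can only be read as an extra standing hypothesis rather than one of the equivalent conditions.
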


\begin{proof}
(1) $\Rightarrow$ (ii): Let $F$ be a finitely presented $\mathcal{O}_Y$–module. Then there exists an exact sequence
\[
\mathcal{O}_Y^m \longrightarrow \mathcal{O}_Y^n \longrightarrow F \longrightarrow 0
\]
which yields the exact sequence
\[
\mathcal{O}_X^m \longrightarrow \mathcal{O}_X^n \longrightarrow \varphi^* F \longrightarrow 0.
\]
Applying $\Gamma(Y, -)$ and $\Gamma(X, -)$ respectively to these sequences, we deduce that the map $\Gamma(Y, F) \to \Gamma(X, \varphi^* F)$ is bijective since $\Gamma(Y, \mathcal{O}_Y) = \Gamma(X, \mathcal{O}_X)$.

(ii) $\Rightarrow$ (i): Let $F'$ be an $A$–coherent $\mathcal{O}_X$–module. Then $F' \cong \varphi^* F$ where $F$ is a finitely presented $\mathcal{O}_Y$–module. Thus
\[
\Gamma(X, F') = \Gamma(X, \varphi^* F) \cong \Gamma(Y, F),
\]
so $F' = \Gamma(X, F')^\sim$. Therefore, if $0 \to F_1' \to F_2' \to F_3' \to 0$ is an exact sequence of $A$–coherent $\mathcal{O}_X$–modules, the sequence
\[
0 \to \Gamma(X, F_1') \to \Gamma(X, F_2') \to \Gamma(X, F_3')
\]
is exact, hence $\Gamma(X, -)$ is exact on $A$–coherent $\mathcal{O}_X$–modles.
\end{proof}

\begin{cor}[I.2.3]
Under the same notations and assumptions as in (I.2), let $F$ be an $\mathcal{O}_X$–module.  
The following are equivalent:
\begin{enumerate}
\item $F$ is an $A$–quasi–coherent $\mathcal{O}_X$–module.
\item $F$ is a direct limit of $A$–coherent $\mathcal{O}_X$–modules.
\end{enumerate}
\end{cor}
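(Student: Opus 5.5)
The plan is to transport the question to $Y=\mathrm{Spec}(A)$, $A=\Gamma(X,\mathcal{O}_X)$, using Theorem (I.1) and Corollary (I.2.1). There it becomes the classical fact that every $A$-module is the direct limit of its finitely presented modules. Since $\varphi^*$ is a left adjoint, it commutes with direct limits, and we pull the result back to $X$.

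For (1) $\Rightarrow$ (2): by (I.1) or (I.2.1), an $A$-quasi-coherent $F$ is isomorphic to $\varphi^* G$ with $G=\widetilde{M}$ and $M$ an $A$-module. Write $M=\varinjlim_\lambda M_\lambda$ as a filtered direct limit of finitely presented $A$-modules. Concretely, take a presentation $A^{(I_1)}\to A^{(I_2)}\to M\to 0$ and index over pairs (finite subset $J_2\subset I_2$, finite set of relations among the images of $J_2$ lying in the image of $A^{(I_1)}$). Since $\sim$ commutes with direct limits, $G=\varinjlim \widetilde{M_\lambda}$. Because $\varphi^*$ is a left adjoint to $\varphi_*$, we get
\[
F\cong\varphi^*G\cong\varinjlim_\lambda \varphi^*\widetilde{M_\lambda}.
\]
Each $\varphi^*\widetilde{M_\lambda}$ is $A$-coherent by the converse part of (I.1), since $\widetilde{M_\lambda}$ is of finite presentation.

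For (2) $\Rightarrow$ (1): suppose $F=\varinjlim_\lambda F_\lambda$ with each $F_\lambda$ $A$-coherent. By (I.2), each $F_\lambda\cong\varphi^*\widetilde{M_\lambda}$ with $M_\lambda=\Gamma(X,F_\lambda)$ finitely presented. By the full faithfulness in (I.2), each transition morphism $F_\lambda\to F_\mu$ comes from a unique $A$-linear map $M_\lambda\to M_\mu$, compatibly with composition. So the system $(F_\lambda)$ is isomorphic to $(\varphi^*\widetilde{M_\lambda})$ for a direct system of $A$-modules. Set $M=\varinjlim M_\lambda$; then $F\cong\varphi^*\widetilde{M}$. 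Since $\widetilde{M}$ is quasi-coherent, $F$ is $A$-quasi-coherent by the converse of (I.1). One can also see this directly: a direct limit of presentations $\mathcal{O}_X^{m_\lambda}\to\mathcal{O}_X^{n_\lambda}\to F_\lambda\to 0$ gives the presentation
\[
\bigoplus_\lambda\mathcal{O}_X^{n_\lambda}\oplus\bigoplus_{\lambda\le\mu}\mathcal{O}_X^{n_\lambda}\oplus\bigoplus_\lambda \mathcal{O}_X^{m_\lambda}\longrightarrow\bigoplus_\lambda\mathcal{O}_X^{n_\lambda}\longrightarrow F\longrightarrow 0.
\]

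The main obstacle is the lifting step in (2) $\Rightarrow$ (1): transition maps between $A$-coherent modules must descend coherently to $A$-module maps, and this is exactly where the equivalence (I.2) is needed. If one wants to avoid it, the fallback is the explicit presentation above, built from the standard colimit presentation $\bigoplus_{\lambda\le\mu}F_\lambda\to\bigoplus_\lambda F_\lambda\to\varinjlim F_\lambda\to 0$. That construction uses no hypotheses of (I.2). One should also note that "direct limit" is meant as filtered, or at least as a colimit in $\mathcal{O}_X$-modules; both arguments work for arbitrary colimits since $\varphi^*$ and $\sim$ preserve them.
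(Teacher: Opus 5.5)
Your main argument is correct and follows the paper's route. You pass to $Y$ via (I.1)/(I.2), write a quasi-coherent $\mathcal{O}_Y$-module as a filtered limit of finitely presented ones, and use that $\varphi^*$ and $M\mapsto\widetilde{M}$ commute with direct limits. For (2)$\Rightarrow$(1) you also make explicit what the paper leaves implicit: the transition maps $F_\lambda\to F_\mu$ descend to $A$-linear maps $\Gamma(X,F_\lambda)\to\Gamma(X,F_\mu)$ by the full faithfulness in (I.2), so the whole system comes from $Y$.

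Your claim that the fallback presentation ``uses no hypotheses of (I.2)'' is false, however.

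\emph{Where the construction needs the hypotheses.} To build the presentation you need, for each $\lambda\le\mu$, a lift $\mathcal{O}_X^{n_\lambda}\to\mathcal{O}_X^{n_\mu}$ of the composite $\mathcal{O}_X^{n_\lambda}\to F_\lambda\to F_\mu$. That means the images of the basis sections in $\Gamma(X,F_\mu)$ must come from $\Gamma(X,\mathcal{O}_X^{n_\mu})$. A sheaf epimorphism $\mathcal{O}_X^{n}\to F$ need not be surjective on global sections. Under (1)--(3) of (I.2) the lift does exist: $\mathrm{Im}(u_\mu)=\varphi^*\mathrm{Im}(v_\mu)$ is $A$-coherent, so exactness of $\Gamma(X,-)$ gives the surjectivity.

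\emph{Without the hypotheses, (2)$\Rightarrow$(1) genuinely fails.} Take $X$ the affine plane over a field $k$ with doubled origin $0_1,0_2$.
\begin{enumerate}
\item Here $A=k[x,y]$ and $\varphi$ is a local isomorphism, hence flat, while hypothesis (3) fails.
\item The $A$-coherent module $P=\varphi^*\widetilde{A/(x,y)}$ is $k_{0_1}\oplus k_{0_2}$. The projection $e$ onto $k_{0_2}$ is an $\mathcal{O}_X$-linear idempotent, so $\varinjlim\bigl(P\xrightarrow{e}P\xrightarrow{e}\cdots\bigr)=k_{0_2}$.
\item By (I.1), every $A$-quasi-coherent module is of the form $\varphi^*\widetilde{M}$. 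Its stalks at $0_1$ and $0_2$ are both $M_{(x,y)}$, so $k_{0_2}$ is not $A$-quasi-coherent.
\end{enumerate}
The failure is exactly the missing lift: $e(\bar 1)=(0,1)\in\Gamma(X,P)=k^2$ is not in the image $\{(c,c)\}$ of $A$. So the explicit presentation is only a valid alternative under the hypotheses of (I.2), and those hypotheses are really needed for (2)$\Rightarrow$(1).
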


\begin{proof}
$(1) \Rightarrow (2)$ follows from Theorem (I.1) since every finitely presented quasi–coherent $\mathcal{O}_Y$–module is a direct limit of $A$–coherent $\mathcal{O}_Y$–modules.  

Conversely, if $F$ is an $A$–coherent $\mathcal{O}_X$–module, then the functor $\Gamma(X, -)$ is exact on the category of $A$–coherent $\mathcal{O}_X$–modules (by hypothesis). Thus, writing $F' = \varphi^* F^\prime$ where $F^\prime = \Gamma(X, F)^\sim$, we have
\[
F' = \varinjlim F_\alpha \quad \text{with} \quad F = \varinjlim F_\alpha,
\]
and the conclusion follows.
\end{proof}

\begin{theorem}[I.3]
Let $A$ be a ring (resp. a Noetherian ring), $Y = \mathrm{Spec}(A)$, and $f \colon X \to Y$ a morphism of ringed spaces. Suppose:
\begin{enumerate}
\item $f$ is flat.
\item For every quasi–coherent (resp. coherent) $\mathcal{O}_Y$–module $F$, we have $H^1(X, f^* F) = 0$.
\end{enumerate}
Then the canonical homomorphism
\[
A = \Gamma(Y, \mathcal{O}_Y) \longrightarrow B = \Gamma(X, \mathcal{O}_X)
\]
is flat.
\end{theorem}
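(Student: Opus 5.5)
We prove flatness by showing that $\mathrm{Tor}_1^A(M,B)=0$ for every $A$-module $M$ (resp. every finitely generated $M$ when $A$ is Noetherian). By the ideal criterion it even suffices to show that $I\otimes_A B\to B$ is injective for every ideal $I\subset A$, and there we may take $I$ finitely generated. The idea is to move the tensor-product question to the sheaf level on $X$. There flatness of $f$ makes $f^*$ exact. The vanishing of $H^1$ then makes $\Gamma(X,-)$ exact on the relevant sheaves.

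First we identify global sections of a pullback of a free or finitely presented module. For a free $A$-module $A^{(J)}$ we have $f^*\widetilde{A^{(J)}}=\mathcal{O}_X^{(J)}$. We need $\Gamma(X,\mathcal{O}_X^{(J)})=B^{(J)}$, which is automatic for $J$ finite. For arbitrary $J$ it needs some care: one passes through filtered colimits, or restricts to finitely presented $M$ as in the Noetherian case. Now take a presentation $A^{m}\to A^{n}\to M\to 0$ and write $K=\mathrm{Im}(A^m\to A^n)$, which gives exact sequences $0\to K\to A^n\to M\to 0$ and $A^m\to K\to 0$.

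Apply $f^*\widetilde{(-)}$. Since $f$ is flat and $\widetilde{(-)}$ is exact, we get exact sequences of $\mathcal{O}_X$-modules
\[
0\to f^*\widetilde K\to \mathcal{O}_X^n\to f^*\widetilde M\to 0 .
\]
Taking global sections, and using $H^1(X,f^*\widetilde K)=0$ (here $\widetilde K$ is quasi-coherent, resp. coherent since $A$ is Noetherian), we get an exact sequence
\[
0\to \Gamma(X,f^*\widetilde K)\to B^n\to \Gamma(X,f^*\widetilde M)\to 0 .
\]
Likewise, from $0\to \widetilde{N}\to \widetilde{A^m}\to\widetilde K\to 0$ with $N=\ker(A^m\to K)$, the hypothesis $H^1(X,f^*\widetilde N)=0$ shows that $B^m\to\Gamma(X,f^*\widetilde K)$ is surjective. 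Combining the two gives the exact sequence $B^m\to B^n\to\Gamma(X,f^*\widetilde M)\to 0$, and hence
\[
\Gamma(X,f^*\widetilde M)\cong \mathrm{coker}(B^m\to B^n)\cong M\otimes_A B .
\]
This holds functorially for all finitely presented $M$. By a colimit argument it extends to all $M$ in the non-Noetherian case. This uses that $\Gamma$ commutes with filtered colimits; that is not automatic, and I expect this to be the main obstacle. In the Noetherian case it is unnecessary, since all finitely generated $M$ are finitely presented.

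Now let $0\to M'\to M\to M''\to 0$ be an exact sequence of finitely presented $A$-modules; with $M=A$ and $M'=I$, this is exactly the ideal-criterion case. Since $f$ is flat, the sequence $0\to f^*\widetilde{M'}\to f^*\widetilde M\to f^*\widetilde{M''}\to 0$ is exact. Taking global sections, the map $\Gamma(X,f^*\widetilde{M'})\to\Gamma(X,f^*\widetilde M)$ is injective because $\Gamma(X,-)$ is left exact. By the identification above, this map is $M'\otimes_A B\to M\otimes_A B$. Applied to an ideal $I$ of $A$, this gives injectivity of $I\otimes_A B\to B$, and therefore $A\to B$ is flat. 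Note that in this last step the $H^1$-vanishing enters only through the identification $\Gamma(X,f^*\widetilde M)\cong M\otimes_A B$; the injectivity itself comes from left exactness of $\Gamma$.
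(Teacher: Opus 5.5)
Your argument is the paper's argument. Both use the ideal criterion and identify $\Gamma(X,f^*\widetilde I)$ with $I\otimes_A B$ through two uses of $H^1$-vanishing, on $f^*\widetilde K$ and $f^*\widetilde N$ (these are the paper's $\ker v''$ and $\ker u''$). Both then get injectivity of $\Gamma(X,f^*\widetilde I)\to B$ from exactness of $f^*$ and left exactness of $\Gamma$. In the Noetherian/coherent case your proof is complete.

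The gap is the non-Noetherian case, exactly at the step you flagged, and it cannot be closed. Restricting to finitely presented $M$ does not help. The ideal criterion needs all finitely generated ideals, and over a non-coherent ring these need not be finitely presented. For such $I$ the relation module $K$ is not finitely generated, so you would need $\Gamma(X,\mathcal O_X^{(J)})=B^{(J)}$ for infinite $J$, which fails for non-quasi-compact $X$.

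In fact the statement is false in that generality. Here is a counterexample.
\begin{itemize}
\item For a field $k$, let $A$ be the localization of $k[x,y_1,y_2,\dots]/(xy_1,xy_2,\dots)$ at $\mathfrak m=(x,y_1,y_2,\dots)$.
\item Let $X=\mathbb N$ with the discrete topology and $\mathcal O_X(U)=A^U$.
\item Let $f$ be the constant map to the closed point, with $f^\#$ given on global sections by the diagonal $A\to A^{\mathbb N}$.
\end{itemize}
All stalk maps are $\mathrm{id}_A$, so $f$ is flat. Every sheaf on a discrete space is flasque, so $H^1(X,f^*F)=0$ for all $F$.

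Now $B=A^{\mathbb N}$, and $(x)\cong A/J$ with $J=\mathrm{Ann}_A(x)=(y_1,y_2,\dots)$. Hence the kernel of $(x)\otimes_A B\to B$ is $J^{\mathbb N}/JB$. This kernel contains the class of $(y_n)_n$: every element of $JB$ has all its coordinates in some $(y_1,\dots,y_r)A$, while $y_n\notin(y_1,\dots,y_r)A$ for $n>r$. So $A\to B$ is not flat; this is an instance of Chase's theorem on products over non-coherent rings. In your terms, $\Gamma(X,f^*\widetilde J)=J^{\mathbb N}$ is strictly larger than the image $JB$ of $J\otimes_A B$.

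The paper's proof hides the same error when it writes $B^{I_1}$ for $\Gamma(X,\mathcal O_X^{I_1})$. The general case does hold if you add that $X$ is quasi-compact. Then $\Gamma(X,-)$ commutes with arbitrary direct sums, and your two-step argument goes through verbatim with free presentations of arbitrary rank, with no colimit over finitely presented modules. The application in Corollary I.3.1 only needs the Noetherian case.
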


\begin{proof}
Let $I$ be an ideal of $A$. Then there is an exact sequence
\[
A^{I_1} \to A^{I_2} \to I \to 0
\]
(where we may take $I_1$ and $I_2$ finite when $A$ is Noetherian), which yields the exact sequence
\[
\mathcal{O}_Y^{I_1} \xrightarrow{u} \mathcal{O}_Y^{I_2} \xrightarrow{v} \tilde{I} \to 0.
\]
We deduce the exact sequence
\[
\mathcal{O}_X^{I_1} \xrightarrow{u'} \mathcal{O}_X^{I_2} \xrightarrow{v'} f^* \tilde{I} \to 0.
\]
\end{proof}

\begin{cor}[I.2.3]
With the notations and hypotheses of (I.2), let $F$ be an $\mathcal O_X$–module.
The following are equivalent:
\begin{enumerate}
\item $F$ is an $\mathcal O_X$–module that is $A$–quasi–coherent;
\item $F$ is a filtered inductive limit of $\mathcal O_X$–modules that are $A$–coherent.
\end{enumerate}
\end{cor}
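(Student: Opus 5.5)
We pass through $Y=\mathrm{Spec}(A)$, $A=\Gamma(X,\mathcal O_X)$, using the correspondence of (I.1) together with the equivalences of (I.2) and (I.2.1). Three standard facts about pullback carry most of the weight. First, $\varphi^*$ commutes with filtered inductive limits, since it is a left adjoint. Second, on $Y$ every quasi-coherent $\mathcal O_Y$-module $\widetilde M$ is the filtered inductive limit of the $\widetilde{M_\alpha}$, where the $M_\alpha$ are finitely presented $A$-modules. This holds because every $A$-module is a filtered colimit of finitely presented modules, and $M\mapsto\widetilde M$ commutes with colimits. Third, filtered inductive limits of quasi-coherent $\mathcal O_Y$-modules are again quasi-coherent.

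For $(1)\Rightarrow(2)$, let $F$ be $A$-quasi-coherent. By (I.1) there is a quasi-coherent $\mathcal O_Y$-module $G=\widetilde M$ with $F\cong\varphi^*G$. We write $M=\varinjlim M_\alpha$ as a filtered limit of finitely presented $A$-modules. Then
\[
F\cong\varphi^*\varinjlim\widetilde{M_\alpha}\cong\varinjlim\varphi^*\widetilde{M_\alpha}.
\]
By the converse part of (I.1), each $\varphi^*\widetilde{M_\alpha}$ is $A$-coherent, which gives (2). This direction uses no hypotheses beyond those of (I.1).

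For $(2)\Rightarrow(1)$, write $F=\varinjlim_{\alpha} F_\alpha$, a filtered limit of $A$-coherent modules. By (I.2) each $F_\alpha\cong\varphi^*G_\alpha$ with $G_\alpha=\Gamma(X,F_\alpha)^\sim$ finitely presented. Each transition map $F_\alpha\to F_\beta$ equals $\varphi^*g_{\alpha\beta}$ for a unique $g_{\alpha\beta}\colon G_\alpha\to G_\beta$, because $\varphi^*$ is fully faithful by (I.2). Uniqueness forces the $g_{\alpha\beta}$ to satisfy the cocycle identities, so $(G_\alpha)$ is a filtered system in the quasi-coherent $\mathcal O_Y$-modules, and $\varphi^*$ sends it to $(F_\alpha)$. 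Set $G=\varinjlim G_\alpha$, which is quasi-coherent, say $G=\widetilde M$ with $M=\varinjlim\Gamma(X,F_\alpha)$. Then $\varphi^*G\cong\varinjlim\varphi^*G_\alpha\cong F$. Finally, $M$ has a presentation $A^{I_1}\to A^{I_2}\to M\to 0$. Applying $\widetilde{(\ )}$ and then the right exact functor $\varphi^*$ yields
\[
\mathcal O_X^{I_1}\to\mathcal O_X^{I_2}\to F\to 0,
\]
so $F$ is $A$-quasi-coherent, which is the converse half of (I.1).

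The main obstacle is the transport of the diagram $(F_\alpha)$ to a diagram on $Y$. A naive objectwise choice of the $G_\alpha$ would not give compatible transition maps. Full faithfulness from (I.2) is exactly what resolves this, since $\varphi^*$ then restricts to an equivalence between finitely presented $\mathcal O_Y$-modules and $A$-coherent modules. Once that is in place, one must also check that the isomorphisms $F_\alpha\cong\varphi^*G_\alpha$ are natural in $\alpha$, so that they pass to the limit. For this I would take $G_\alpha=\Gamma(X,F_\alpha)^\sim$ functorially and use the canonical adjunction morphism $\varphi^*\Gamma(X,-)^\sim\to\mathrm{id}$, which is natural and is an isomorphism on $A$-coherent modules by (I.2).
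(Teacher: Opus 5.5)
Your proposal is correct and follows the same route as the paper. For $(1)\Rightarrow(2)$ you combine (I.1) with the facts that every quasi-coherent $\mathcal O_Y$-module is a filtered limit of finitely presented ones and that $\varphi^*$ commutes with inductive limits; for $(2)\Rightarrow(1)$ you use the hypotheses of (I.2) to write $F_\alpha\cong\varphi^*\Gamma(X,F_\alpha)^\sim$, pass to the limit, and conclude with (I.1). Two points in your version make rigorous what the paper's compressed argument leaves implicit: you handle the transition maps explicitly (by full faithfulness, or by the natural map $\varphi^*\Gamma(X,-)^\sim\to\mathrm{id}$), and you form the limit on $Y$ instead of asserting $\Gamma(X,\varinjlim F_\alpha)=\varinjlim\Gamma(X,F_\alpha)$.
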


\begin{proof}
(1)$\Rightarrow$(2) follows from Theorem (I.1), since every $\mathcal O_Y$–quasi–coherent module of finite presentation is a filtered inductive limit of $\mathcal O_Y$–modules of finite presentation.  
Conversely, if $F$ is an $\mathcal O_X$–module that is $A$–coherent, then the functor
\[
\Gamma(X, -)
\]
is exact in the category of $\mathcal O_X$–modules that are $A$–coherent (by hypothesis). Hence for $F' = \varphi^*F$ we have
\[
F' = \varinjlim F_\alpha \qquad\text{and}\qquad \Gamma(X, F')=\varinjlim \Gamma(X,F_\alpha),
\]
so the system $(F_\alpha)$ is filtered and $F \simeq \varinjlim F_\alpha$, which proves the claim.
\end{proof}

\begin{theorem}[I.3]
Let $A$ be a Noetherian ring, let $Y=\operatorname{Spec}(A)$, and let $X$ be a ringed space. Let
\[
f:X \to Y
\]
be a morphism of ringed spaces. Assume that:
\begin{enumerate}
\item $f$ is flat;
\item for every quasi-coherent (resp.\ coherent) $\mathcal O_Y$-module $F$, one has
\[
H^1(X,f^*F)=0.
\]
\end{enumerate}
Then the canonical morphism
\[
A=\Gamma(Y,\mathcal O_Y)\longrightarrow B=\Gamma(X,\mathcal O_X)
\]
is flat.
\end{theorem}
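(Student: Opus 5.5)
We use the ideal criterion for flatness: $A \to B$ is flat as soon as, for every ideal $I \subseteq A$ (finitely generated suffices, and all ideals are such since $A$ is Noetherian), the map $I \otimes_A B \to B$ is injective. Fix such an ideal $I$ and the exact sequence of $\mathcal O_Y$-modules
\[
0 \longrightarrow \widetilde{I} \longrightarrow \mathcal O_Y \longrightarrow \widetilde{A/I} \longrightarrow 0 .
\]
Since $f$ is flat, $f^*$ is exact, so we obtain an exact sequence
\[
0 \longrightarrow f^*\widetilde{I} \longrightarrow \mathcal O_X \longrightarrow f^*\widetilde{A/I} \longrightarrow 0 .
\]
Taking global sections gives $0 \to \Gamma(X, f^*\widetilde I) \to B$, which is injective by left exactness of $\Gamma$. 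The task is therefore to identify $\Gamma(X, f^*\widetilde I)$ with $I\otimes_A B$ compatibly with the multiplication map.

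For this we compare $\Gamma(X, f^*\widetilde M)$ with $M\otimes_A B$ for every finitely presented (coherent) $A$-module $M$. As in the proof of (I.1), choose a presentation $A^m \to A^n \to M \to 0$. The resolution continues: there is an exact sequence $0\to K \to A^n \to M\to 0$ together with a surjection $A^m \to K$, where $K$ is again coherent because $A$ is Noetherian. Applying the exact functor $f^*\circ\widetilde{(\ )}$ to these sequences, and then $\Gamma(X,-)$, we use the hypothesis $H^1(X, f^*\widetilde K)=0$ to get exactness of
\[
0\to \Gamma(X,f^*\widetilde K)\to B^n \to \Gamma(X, f^*\widetilde M)\to 0 .
\]
It remains to see that $B^m \to \Gamma(X,f^*\widetilde K)$ is surjective. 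Here we apply the same argument to $0\to K'\to A^m\to K\to 0$ and use $H^1(X,f^*\widetilde{K'})=0$. Splicing yields exactness of $B^m\to B^n\to \Gamma(X,f^*\widetilde M)\to 0$, so the natural map $M\otimes_A B\to \Gamma(X,f^*\widetilde M)$ is an isomorphism, functorially in $M$.

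Applying this with $M=I$ identifies $I\otimes_A B\to B$ with $\Gamma(X,f^*\widetilde I)\to \Gamma(X,\mathcal O_X)$, which we showed is injective. Hence $B$ is flat over $A$. In the non-Noetherian, quasi-coherent variant, the same argument works with arbitrary free modules. There one must use that $f^*$ commutes with direct sums, and that $\Gamma(X, \mathcal O_X^{(J)})=B^{(J)}$. This last identification is where extra care (or a quasi-compactness hypothesis) is needed, and I expect it to be the main obstacle in that case. In the Noetherian case stated here, the key step is the naturality check that the isomorphism $M\otimes_A B\cong\Gamma(X,f^*\widetilde M)$ is compatible with the map induced by $I\hookrightarrow A$. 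This follows because both sides are built from the same presentations.
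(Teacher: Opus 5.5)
Your argument is correct and follows the paper's proof: you pull back a finite presentation of $I$, use flatness to identify the kernels with pullbacks of coherent sheaves, and use the $H^1$-vanishing to get $I\otimes_A B\cong\Gamma(X,f^*\widetilde I)$. You then conclude, as the paper does, from injectivity of $\Gamma(X,f^*\widetilde I)\to B$ and the ideal criterion. Your justification of that injectivity (exactness of $f^*$ applied to $\widetilde I\hookrightarrow\mathcal O_Y$, then left exactness of $\Gamma$) is the correct reading of the paper's garbled remark about $\mathcal O_Y\to f_*\mathcal O_X$.
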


\begin{proof}
Let $I$ be an ideal of $A$. There is an exact sequence $A^{I_1}\xrightarrow{u}A^{I_2}\xrightarrow{v} I\to 0$ (with $I_1,I_2$ finite when $A$ is Noetherian), which yields an exact sequence
\[
\mathcal O_Y^{I_1}\xrightarrow{u'}\mathcal O_Y^{I_2}\xrightarrow{v'} \widetilde I\to 0.
\]
Pulling back by $f$ gives an exact sequence
\[
\mathcal O_X^{I_1}\xrightarrow{u''}\mathcal O_X^{I_2}\xrightarrow{v''} f^*\widetilde I\to 0.
\]
Since $f$ is flat, $\ker(u'')=f^*(\ker(u'))$ and $\ker(v'')=f^*(\ker(v'))$.  
Hypothesis (2) implies $H^1(X,\ker(u''))=H^1(X,\ker(v''))=0$. Hence the sequence
\[
B^{I_1}\longrightarrow B^{I_2}\longrightarrow \Gamma(X,f^*\widetilde I)\longrightarrow 0
\]
is exact. But also
\[
B^{I_1}\longrightarrow B^{I_2}\longrightarrow I\otimes_A B\longrightarrow 0
\]
is exact, so the canonical map $I\otimes_A B\to \Gamma(X,f^*\widetilde I)$ is bijective. As $f^*\mathcal O_Y=\mathcal O_X$, the map $\mathcal O_Y\to f_*\mathcal O_X$ is injective, hence the induced map
\[
\Gamma(X,f^*\mathcal O_Y)=\Gamma(X,\mathcal O_X)=B
\]
is injective. Therefore $I\otimes_A B\to B$ is injective for every ideal $I$, which shows $A\to B$ is flat.
\end{proof}

But since $f$ is flat, we deduce the relations
\[
\ker(u'') = f^*\big(\ker(u')\big), \quad \ker(v'') = f^*\big(\ker(v')\big).
\]
Hypothesis (2) then implies
\[
H^1\big(X, \ker(u'')\big) = H^1\big(X, \ker(v'')\big) = 0.
\]
It follows that the sequence
\[
B^{I_1} \longrightarrow B^{I_2} \longrightarrow \Gamma\big(X, f^*\widetilde{J}\big) \longrightarrow 0
\]
is exact. Thus we have the exact sequence
\[
B^{I_1} \longrightarrow B^{I_2} \longrightarrow I \otimes_A B \longrightarrow 0.
\]
Hence the canonical map
\[
I \otimes_A B \longrightarrow \Gamma(X, f^*\widetilde{J})
\]
is bijective. As $f$ is flat by hypothesis, we deduce that the morphism
\[
f^*\mathcal{O}_Y \longrightarrow \mathcal{O}_X
\]
is injective. Consequently, the canonical map
\[
\Gamma(X, f^*\mathcal{O}_Y) \longrightarrow B = \Gamma(X, \mathcal{O}_X)
\]
is injective. It follows that the canonical map $I \otimes_A B \longrightarrow B$ is injective for all ideals $I$, so $A \longrightarrow B$ is flat.

\begin{cor}[Mario Raimondo, I.3.1]
Let $U$ be an open semi-algebraic subset of $\mathbb{R}^n$, $N(U)$ the ring of NASH functions on $U$, and $\mathcal{O}(U)$ the ring of analytic functions on $U$. Then the canonical homomorphism
\[
N(U) \longrightarrow \mathcal{O}(U)
\]
is faithfully flat.
\end{cor}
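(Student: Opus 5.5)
We apply Theorem (I.3) with $A = N(U)$, $Y = \mathrm{Spec}(N(U))$, and $X = (U, \mathcal{O}^{an}_U)$ the real analytic manifold $U$ with its sheaf of real analytic functions. Then $B = \Gamma(X,\mathcal{O}_X) = \mathcal{O}(U)$.

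\textbf{The morphism $f$.} Since $N(U)$ is a ring of global sections of functions on $U$, there is a canonical morphism of locally ringed spaces $f : X \to Y$. It sends a point $x \in U$ to the maximal ideal $\mathfrak m_x = \{g \in N(U) : g(x)=0\}$. On sheaves it is the adjoint of $N(U) \to \Gamma(U,\mathcal{O}^{an}_U)$. We use the known fact that $N(U)$ is Noetherian (Efroymson, Mostowski), so the Noetherian version of (I.3) is available.

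\textbf{Hypothesis (1): flatness of $f$.} At each $x$ we must show that $N(U)_{\mathfrak m_x} \to \mathcal{O}^{an}_{U,x}$ is flat. The plan is to factor it as
\[
N(U)_{\mathfrak m_x} \to \mathcal{N}_x \to \mathcal{O}^{an}_x = \mathbb{R}\{x_1,\dots,x_n\}.
\]
Here $\mathcal{N}_x$ is the ring of Nash germs, which is the henselization of the local ring of $\mathbb{R}[x_1,\dots,x_n]$ at $x$. Both $N(U)_{\mathfrak m_x}$ and $\mathcal{N}_x$ are Noetherian local rings with the same completion $\mathbb{R}[[x_1,\dots,x_n]]$, and $\mathbb{R}\{x\}$ also has this completion. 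A local homomorphism of Noetherian local rings that induces an isomorphism on completions is faithfully flat, because the completion is faithfully flat over each ring. Hence each stalk map is (faithfully) flat.

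\textbf{Hypothesis (2): vanishing of $H^1$.} For a coherent $\mathcal{O}_Y$-module $F = \widetilde M$, the pullback $f^*F$ has a finite presentation $\mathcal{O}_U^m \to \mathcal{O}_U^n \to f^*F \to 0$, so it is a coherent analytic sheaf on $U$. By Cartan's Theorem~B for real analytic manifolds (with $U$ embedded in a Stein complexification), $H^1(U, f^*F)=0$. Theorem (I.3) then gives that $N(U) \to \mathcal{O}(U)$ is flat.

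\textbf{Faithfulness.} A flat map $A \to B$ is faithfully flat iff $\mathfrak m B \neq B$ for every maximal ideal $\mathfrak m$ of $A$. This is equivalent to condition (3) of (I.2.2), namely $\varphi(X) \supseteq \mathrm{Max}(Y)$. The \textbf{main obstacle} is showing that every maximal ideal of $N(U)$ is of the form $\mathfrak m_x$; this is false for noncompact $U$, where there are maximal ideals "at infinity". I would try the following argument instead. Let $\mathfrak m$ be maximal and choose generators $g_1,\dots,g_k$ of $\mathfrak m$, which is possible since $N(U)$ is Noetherian.

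\emph{Case 1.} If the $g_i$ have a common zero $x$, then $\mathfrak m = \mathfrak m_x$, and $\mathfrak m\,\mathcal{O}(U) \subseteq \{h : h(x)=0\} \neq \mathcal{O}(U)$.

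\emph{Case 2.} If they have no common zero, then $g = \sum g_i^2$ is a Nash function with no zeros. So $g$ is a unit in $N(U)$ and lies in $\mathfrak m$, which is a contradiction.

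Hence every maximal ideal is of the form $\mathfrak m_x$, so $\mathfrak m\,\mathcal{O}(U) \neq \mathcal{O}(U)$ for every maximal $\mathfrak m$, and faithful flatness follows. The step most likely to need care is the stalkwise flatness, specifically the identification of the completions.
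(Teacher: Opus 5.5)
Your proposal is correct and follows the paper's proof. You apply the Noetherian case of (I.3) to $f\colon (U,\mathcal{O}_U)\to\operatorname{Spec}N(U)$ using flatness of $f$ and Cartan's Theorem~B, and you get faithfulness because every maximal ideal of $N(U)$ is a point ideal $\mathfrak m_x$, i.e.\ the trace of a maximal ideal of $\mathcal{O}(U)$. Beyond that, you supply justifications the paper only asserts: the completion argument for stalkwise flatness and the sum-of-squares argument.

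One correction: your aside that $N(U)$ has maximal ideals ``at infinity'' when $U$ is noncompact is false, and your own argument proves the opposite. Finite generation of $\mathfrak m$ is exactly what rules such ideals out, unlike in the non-Noetherian ring $\mathcal{O}(U)$. That sentence should simply be deleted rather than ``worked around''.
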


\begin{proof}
$N(U)$ is Noetherian, and the morphism
\[
X = (U, \mathcal{O}_U) \xrightarrow{f} Y = \operatorname{Spec}(N(U))
\]
is flat, where $\mathcal{O}_U$ is the sheaf of analytic functions on $U$. Moreover, $H^1(X, f^*F) = 0$ by Cartan’s theorem B. Thus the homomorphism $N(U) \to \mathcal{O}(U)$ is flat and faithfully flat, since every maximal ideal of $N(U)$ is the trace of a maximal ideal of $\mathcal{O}(U)$.
\end{proof}

\begin{cor}[I.3.2]
Let $V$ be a nonsingular algebraic subvariety of $\mathbb{R}^n$ (cf. Chap. II), $N(V)$ the ring of NASH functions on $V$, and $\mathcal{O}(V)$ the ring of analytic functions on $V$. Then the canonical homomorphism
\[
N(V) \longrightarrow \mathcal{O}(V)
\]
is faithfully flat.
\end{cor}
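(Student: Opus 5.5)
The plan is to follow the proof of Corollary (I.3.1) verbatim and apply Theorem (I.3) to the canonical morphism
\[
f \colon X = (V, \mathcal{O}_V) \longrightarrow Y = \operatorname{Spec}(N(V)),
\]
where $\mathcal{O}_V$ is the sheaf of real analytic functions on the nonsingular variety $V$. Three hypotheses need to be checked. First, $N(V)$ must be Noetherian. Second, $f$ must be flat. Third, $H^1(X, f^*F) = 0$ for every coherent $\mathcal{O}_Y$-module $F$. Once these hold, Theorem (I.3) gives flatness of $N(V) \to \mathcal{O}(V)$. Faithful flatness then follows from the usual criterion: it suffices that every maximal ideal of $N(V)$ is the trace of a maximal ideal of $\mathcal{O}(V)$.

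\textbf{Noetherianity.} I would reduce to the case already treated. Since $V$ is a nonsingular algebraic subset of $\mathbb{R}^n$, it has a semi-algebraic tubular neighbourhood $U$ with a Nash retraction $\rho \colon U \to V$. Composition with $\rho$ and restriction to $V$ then give
\[
N(V) \cong N(U)/I(V), \qquad \mathcal{O}(V) \cong \mathcal{O}(U)/I(V)\mathcal{O}(U),
\]
where $I(V)$ is the ideal of Nash functions on $U$ vanishing on $V$. Noetherianity of $N(V)$ follows from that of $N(U)$.

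\textbf{Flatness and faithful flatness.} Using the same isomorphisms, flatness of $N(V)\to\mathcal{O}(V)$ can be obtained by base change along $N(U)\to N(U)/I(V)$ from Corollary (I.3.1). This bypasses checking the hypotheses of Theorem (I.3) directly, and I would present it as the main route. For faithful flatness, I would show that every maximal ideal $\mathfrak{m}$ of $N(V)$ lies below a maximal ideal of $\mathcal{O}(V)$. Equivalently, $\mathfrak{m}\mathcal{O}(V) \neq \mathcal{O}(V)$. I would transport this from $U$ as well: the preimage of $\mathfrak{m}$ is a maximal ideal of $N(U)$ containing $I(V)$, and faithful flatness of $N(U)\to \mathcal{O}(U)$ gives $\mathfrak{m}\mathcal{O}(U) \neq \mathcal{O}(U)$. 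Reducing modulo $I(V)\mathcal{O}(U)$ then gives the claim.

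\textbf{Main obstacle.} The delicate step is justifying
\[
\mathcal{O}(U)/I(V)\mathcal{O}(U) \cong \mathcal{O}(V).
\]
Surjectivity follows from the retraction $\rho$, since any $g \in \mathcal{O}(V)$ extends as $g\circ\rho$. The hard part is that the analytic functions vanishing on $V$ are generated by $I(V)$. For this I would use Cartan's Theorem~B on a Stein complexification of $U$, together with coherence of the ideal sheaf of $V$. Because $V$ is nonsingular, its analytic ideal sheaf is locally generated by Nash equations. If this step proves hard, the fallback is to verify the hypotheses of Theorem (I.3) directly on $V$. Flatness of $f$ would come from the local rings of $\mathcal{O}_V$ being completions-compatible with the Nash local rings. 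The vanishing $H^1 = 0$ would come from Theorem~B on a Stein neighbourhood of $V$ in $\mathbb{C}^n$.
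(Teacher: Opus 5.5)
Your argument is correct, but it takes a different route from the paper's. The paper gives no separate proof of (I.3.2). It is meant to repeat (I.3.1) with $V$ in place of $U$, i.e.\ apply Theorem (I.3) to $(V,\mathcal{O}_V)\to\operatorname{Spec}N(V)$. That requires Noetherianity of $N(V)$, flatness of this morphism, Cartan's Theorem~B on $V$ to get $H^1(V,f^*F)=0$, and the fact that maximal ideals of $N(V)$ are traces of maximal ideals of $\mathcal{O}(V)$. The paper defers all facts about $V$ to ``Chap.~II''; this is exactly your fallback.

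Your main route never examines the morphism $V\to\operatorname{Spec}N(V)$. The Nash tubular neighbourhood presents $N(V)\to\mathcal{O}(V)$ as the base change of $N(U)\to\mathcal{O}(U)$ along $N(U)\to N(U)/I(V)$. Noetherianity then passes to the quotient, and flatness and faithful flatness pass by base change from (I.3.1). What this buys is that the only new inputs are the retraction $\rho$ and the lemma $\ker(\mathcal{O}(U)\to\mathcal{O}(V))=I(V)\mathcal{O}(U)$. Theorem~B is not avoided, only relocated: you use it once, on $U$, to prove that lemma. The paper's route, by contrast, is uniform with (I.3.1) but must re-establish Noetherianity and stalkwise flatness for $N(V)$, which the present text does not do.

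One step in your lemma should be made precise. You need finitely many \emph{global} elements $h_1,\dots,h_k\in I(V)$ whose germs generate the analytic ideal sheaf $\mathcal{I}_V$ at every point of $U$. Then $\mathcal{O}_U^k\to\mathcal{I}_V\to 0$ has coherent kernel, and Theorem~B makes $\mathcal{O}(U)^k\to\Gamma(U,\mathcal{I}_V)$ surjective. ``Locally generated by Nash equations'' does not give this by itself, since local equations need not come from $I(V)$. For nonsingular algebraic $V$, generators $P_1,\dots,P_k$ of the polynomial ideal of $V$ work. Nonsingularity at $x\in V$ gives $n-d$ elements of that ideal with independent gradients that cut out $V$ near $x$, so the germs of the $P_i$ generate $\mathcal{I}_{V,x}$. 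Off $V$, some $P_i$ is a unit germ.

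A small slip in the faithful flatness step: you mean $\mathfrak{M}\mathcal{O}(U)\neq\mathcal{O}(U)$, where $\mathfrak{M}$ is the preimage of $\mathfrak{m}$ in $N(U)$. Alternatively, simply quote that faithful flatness is stable under base change.
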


\begin{theorem}[I.4]
Let $(X, \mathcal{O}_X)$ be a locally ringed space, $B = \Gamma(X, \mathcal{O}_X)$, $Y = \operatorname{Spec}(B)$, and $\varphi : X \to Y$ the canonical morphism. Suppose:
\begin{enumerate}
\item $\varphi$ is flat;
\item $B$ is a finite product of integral domains;
\item $\varphi(X)$ contains $\mathrm{Max}(B)$.
\end{enumerate}
Then the functor
\[
G \longmapsto \varphi^* G
\]
from the category of $\mathcal{O}_Y$–modules of finite presentation (resp. quasi–coherent) and torsion–free to the category of $\mathcal{O}_X$–modules that are $A$–coherent and torsion–free is an equivalence of categories.
\end{theorem}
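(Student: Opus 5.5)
We prove Theorem (I.4) in three steps: essential surjectivity from (I.1), full faithfulness by reduction to the exactness criterion of (I.2.2), and a torsion-free argument that replaces the Noetherian hypothesis (1) of (I.2).

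\textbf{Essential surjectivity.} By (I.1) every $A$–coherent (resp. $A$–quasi–coherent) $\mathcal{O}_X$–module $F$ is isomorphic to $\varphi^*G$. Here $G=\widetilde M$ with
\[
M=\mathrm{coker}\bigl(B^{I_1}\to B^{I_2}\bigr).
\]
The module $G$ need not be torsion-free, so we replace $G$ by $G/T(G)$, where $T(G)$ is the torsion submodule.

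Since $B=\prod_{i=1}^r B_i$ with each $B_i$ an integral domain, $Y=\coprod_i \mathrm{Spec}(B_i)$ and torsion is computed componentwise. Because $\varphi$ is flat, $\varphi^*$ is exact. Hence
\[
\varphi^*(G/T(G))=\varphi^*G/\varphi^*T(G).
\]
We claim $\varphi^*T(G)$ lies in the torsion of $\varphi^*G=F$. Indeed, a section of $T(G)$ is killed by a nonzero $b$ in some factor $B_i$, and its pullback is killed by the image of $b$ in $\mathcal{O}_X$. Condition (3) makes this image nonzero on each stalk-component where it matters: every maximal ideal of $B_i$ is hit, so $b$ is not sent to zero in $\mathcal{O}_{X,x}$ for a suitable $x$.

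Since $F$ is torsion-free, $\varphi^*T(G)=0$, and therefore $F\cong\varphi^*(G/T(G))$. In the finite-presentation case we must also check that $G/T(G)$ is still finitely presented. I would get this from $T(G)=\ker\bigl(G\to G\otimes_B K\bigr)$, where $K=\prod_i \mathrm{Frac}(B_i)$. If this does not give finite presentation, I would instead argue directly that $M$ is already torsion-free, as follows.

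\textbf{Full faithfulness.} The main step is
\[
\mathrm{Hom}_{\mathcal{O}_Y}(G_1,G_2)\cong\mathrm{Hom}_{\mathcal{O}_X}(\varphi^*G_1,\varphi^*G_2).
\]
By adjunction the right side equals $\mathrm{Hom}_{\mathcal{O}_Y}(G_1,\varphi_*\varphi^*G_2)$. It therefore suffices to show that the unit $G_2\to\varphi_*\varphi^*G_2$ is an isomorphism on global sections, i.e. $\Gamma(Y,G_2)\to\Gamma(X,\varphi^*G_2)$. Condition (ii) of (I.2.2) is stated for finitely presented modules, and the global $\mathrm{Hom}$ out of a quasi-coherent $G_1$ reduces to these sections after localizing.

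\emph{Injectivity.} Take $s\in\Gamma(Y,G_2)$ with $\varphi^*s=0$. Then $s$ vanishes in the stalks $G_{2,\varphi(x)}\otimes\mathcal{O}_{X,x}$. By flatness together with (3) (a flat local homomorphism is faithfully flat), $s_{\mathfrak m}=0$ for every maximal ideal $\mathfrak m$, so $s=0$.

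\emph{Surjectivity.} This is the hard point and is where torsion-freeness enters. Embed $G_2\hookrightarrow G_2\otimes_B K$, which is free of finite rank over each factor of $K$ when $G_2$ is finitely generated. Then compare sections of $\varphi^*G_2$ with sections of $\varphi^*(G_2\otimes K)$, and use that a section which is locally in $G_2$ at every maximal ideal lies in $G_2$, since $G_2=\bigcap_{\mathfrak m}(G_2)_{\mathfrak m}$ inside $G_2\otimes K$.

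\textbf{Main obstacle.} I expect the hardest step to be surjectivity on global sections, i.e. establishing the analogue of (I.2.2)(ii) without a Noetherian hypothesis. What I would try first is to reduce to the case $G_2=\mathcal{O}_Y^n$, where the map is $B^n\to B^n$ and hence an identity, via a presentation combined with the torsion-free embedding above. Once the unit is an isomorphism, full faithfulness follows formally from adjunction, and together with essential surjectivity this gives the equivalence.
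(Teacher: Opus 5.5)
Your reduction is sound. Since $\mathrm{Hom}_{\mathcal O_Y}(\widetilde{M_1},\mathcal H)=\mathrm{Hom}_B(M_1,\Gamma(Y,\mathcal H))$ for any $\mathcal O_Y$-module $\mathcal H$, full faithfulness comes down to bijectivity of the unit $M_2\to\Gamma(X,\varphi^*\widetilde{M_2})$. Your injectivity argument is correct for \emph{every} $B$-module: a flat local homomorphism $B_{\mathfrak m}\to\mathcal O_{X,x}$ is faithfully flat, and (3) supplies such an $x$ over every maximal ideal. This is the same crux the paper aims at when it reduces to $F\cong\varphi^*(\Gamma(X,F)^\sim)$. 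However, the paper's justification at that point (a monomorphism of torsion-free finitely presented modules splits) fails in general, e.g.\ multiplication by $x$ on $k[x]$, so there is nothing to borrow from it.

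The genuine gap is surjectivity, which you flag but do not prove. Routing through $G_2\hookrightarrow G_2\otimes_B K$ presupposes that a global section of $\varphi^*(G_2\otimes_B K)$ comes from $G_2\otimes_B K$, i.e.\ $\Gamma(X,\varphi^*\widetilde{K^n})=K^n$. That is an instance of the very claim under proof, for a module that is not finitely generated (on a non-quasi-compact $X$ a section may need infinitely many denominators), and you give no argument for it. The fallback ``via a presentation'' goes the wrong way. It needs right exactness of $\Gamma(X,-)$ on the pulled-back presentation, which is exactly the cohomological hypothesis of (I.2) that (I.4) is meant to do without.

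The missing idea is to embed into a \emph{finite free} module instead. After splitting off the factors, $B$ is a domain. Clearing denominators in $M_2\otimes_B K\cong K^n$ gives $0\to M_2\to B^n\to Q\to 0$. By flatness, $0\to\varphi^*\widetilde{M_2}\to\mathcal O_X^n\to\varphi^*\widetilde Q\to 0$ is exact, so
\[
\Gamma(X,\varphi^*\widetilde{M_2})=\ker\bigl(B^n\to Q\to\Gamma(X,\varphi^*\widetilde Q)\bigr)=M_2,
\]
because your injectivity argument applies verbatim to the arbitrary module $Q$. This uses finite generation, so the ``resp.\ quasi-coherent'' variant is still not covered.

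There are also two smaller repairs in essential surjectivity.
\begin{enumerate}
\item A pulled-back torsion section is torsion because a nonzero $b\in B$ is a nonzerodivisor on $B_{\varphi(x)}$ and hence, by flatness, on $\mathcal O_{X,x}$. Condition (3), which only says $b$ survives in some stalk, is not the relevant input.
\item Once $\varphi^*T(G)=0$, faithful flatness gives $T(G)=0$ outright. So $G$ is already torsion-free and finitely presented, and the detour through $G/T(G)$ is unnecessary.
\end{enumerate}
The same embedding $M\hookrightarrow B^n$ also shows that $\varphi^*\widetilde M\subseteq\mathcal O_X^n$ is torsion-free. You need this for the functor to land in the target category at all.
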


\begin{proof}
We may assume that $A$ is an integral domain. To prove the theorem, it suffices to show that if $F$ is an $\mathcal{O}_X$–module, $A$–coherent and torsion–free, then $F \cong \varphi^* G$ where $G = \Gamma(X, F)^{\sim}$.

Indeed, if $G'$ is a quasi–coherent $\mathcal{O}_Y$–module of finite presentation such that $F \cong \varphi^* G'$, we deduce a morphism
\[
g : G' \longrightarrow G
\]
which is an isomorphism by hypotheses (1) and (3), since $\varphi^* g$ is an isomorphism.

Moreover, for $F' = \Gamma(X, F)^{\sim}$, the morphism
\[
\mathcal{O}_Y^{(I)} \xrightarrow{u} F' \xrightarrow{v} \varphi_* F
\]
is a monomorphism of $\mathcal{O}_Y$–modules of finite presentation, torsion–free, hence split injective, so $u$ is a monomorphism. Therefore $u$ is both a monomorphism and an epimorphism, since $F$ is generated by its global sections. Thus $u$ is an isomorphism of sheaves.

It follows that $F \cong \varphi^* G$ with $G = \Gamma(X, F)^{\sim}$, and the equivalence of categories follows.
\end{proof}

\begin{cor}[I.4.1]
Let $U$ be a semi–algebraic open subset of $\mathbb{R}^n$, $N(U)$ the ring of NASH functions on $U$, and $\mathcal{N}_U$ the sheaf of NASH functions on $U$. Then the functor
\[
F \longmapsto \Gamma(U, F)
\]
is an equivalence between the category of $\mathcal{N}_U$–modules that are $A$–quasi–coherent, torsion–free, and the category of $N(U)$–modules that are quasi–coherent, torsion–free, and of finite type.
\end{cor}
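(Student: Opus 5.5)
The plan is to derive Corollary (I.4.1) from Theorem (I.4), applied to the locally ringed space $X=(U,\mathcal{N}_U)$ with $B=\Gamma(U,\mathcal{N}_U)=N(U)$, $Y=\operatorname{Spec}(N(U))$ and $\varphi\colon X\to Y$ the canonical morphism. Three hypotheses of (I.4) have to be checked.

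\textbf{Hypothesis (2).} Since $U$ has finitely many semi-algebraic connected components $U_1,\dots,U_r$, we have $N(U)=\prod_i N(U_i)$. Each $N(U_i)$ is an integral domain, because a Nash function vanishing on an open subset of a connected $U_i$ vanishes identically by analyticity. Moreover $N(U)$ is Noetherian, which is the fact already used in (I.3.1).

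\textbf{Hypothesis (1), flatness of $\varphi$.} For $x\in U$ let $\mathfrak m_x=\{g\in N(U): g(x)=0\}$. I would show that the induced map of local rings
\[
N(U)_{\mathfrak m_x}\to \mathcal{N}_{U,x}
\]
is flat. Both rings are Noetherian local, and both have completion $\mathbb{R}[[t_1,\dots,t_n]]$; the key input here is that the germs of global Nash functions generate $\mathcal{N}_{U,x}$ up to units. Since $\mathcal{N}_{U,x}\to\mathbb{R}[[t]]$ is faithfully flat, flatness of the map of local rings follows by the standard completion argument.

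\textbf{Hypothesis (3), $\varphi(X)\supseteq\mathrm{Max}(N(U))$.} This is the step I expect to be the main obstacle, since $U$ need not be compact. The classical fact to prove is that every maximal ideal of $N(U)$ is of the form $\mathfrak m_x$. Let $\mathfrak m$ be a maximal ideal and take finitely many generators $g_1,\dots,g_k$ of $\mathfrak m$, which exist by Noetherianity. If $\mathfrak m\neq\mathfrak m_x$ for every $x\in U$, then $h=\sum g_i^2$ has no zero on $U$. A nowhere-vanishing Nash function is a unit in $N(U)$, so $\mathfrak m=(1)$, a contradiction. This is the same fact that underlies the faithful flatness claim in (I.3.1).

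With the three hypotheses in hand, Theorem (I.4) gives an equivalence $G\mapsto\varphi^*G$ from torsion-free quasi-coherent $\mathcal{O}_Y$-modules to torsion-free $A$-(quasi-)coherent $\mathcal{N}_U$-modules. Its quasi-inverse is $F\mapsto\Gamma(U,F)^{\sim}$; this is exactly how the proof of (I.4) produces $F\cong\varphi^*(\Gamma(X,F)^{\sim})$. Composing with the equivalence $G\mapsto\Gamma(Y,G)$ between quasi-coherent $\mathcal{O}_Y$-modules and $N(U)$-modules shows that $F\mapsto\Gamma(U,F)$ is an equivalence onto the category of torsion-free $N(U)$-modules.

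For the finite-type restriction in the statement, I would note that on the $A$-coherent side $\Gamma(U,F)$ is a cokernel of a map $N(U)^m\to N(U)^n$, hence finitely generated. The correspondence between $A$-coherent and finitely generated objects is then the one already set up in (I.1), using Noetherianity to pass between finite type and finite presentation.
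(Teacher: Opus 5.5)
Your route (apply (I.4) to $X=(U,\mathcal N_U)$, $B=N(U)$, after checking its three hypotheses) is the one the paper intends. The paper states (I.4.1) without proof, as a direct application of (I.4), just as (I.3.1) is read off from (I.3). Your checks of (2) and (3) are sound. In particular, the argument for (3), using Noetherianity to pass to $\sum g_i^2$ and the invertibility of nowhere-vanishing Nash functions, is exactly right.

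The weak point is (1). The completion argument needs the natural map $\widehat{N(U)_{\mathfrak m_x}}\to\widehat{\mathcal N_{U,x}}=\mathbb R[[t]]$ to be an isomorphism. Surjectivity is easy, since $\mathbb R[x]_{(x-a)}\subseteq N(U)_{\mathfrak m_x}$. Injectivity amounts to $\mathfrak m_xN(U)_{\mathfrak m_x}=(x_1-a_1,\dots,x_n-a_n)$, i.e.\ to $N(U)_{\mathfrak m_x}$ being regular of dimension $n$. That is a global division statement about Nash functions on $U$, not a formal consequence of Noetherianity, and the input you cite does not supply it, because it concerns the target ring. Read as ``the coordinate germs generate the maximal ideal of $\mathcal N_{U,x}$'', it only yields the surjectivity. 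Read as ``$N(U)_{\mathfrak m_x}\to\mathcal N_{U,x}$ is onto up to units'', it is false. On $\mathbb R^2$, a global Nash function vanishing near $0$ on the branch $y=x\sqrt{1+x}$ vanishes on the whole nodal cubic $y^2=x^2(1+x)$ (follow the parametrization $t\mapsto(t^2-1,t^3-t)$). Hence the germ of $y-x\sqrt{1+x}$ is not a unit times $g/s$ with $s(0)\neq0$. So (1) is asserted rather than proved. You need to quote this regularity, or the flatness itself, from the Nash-function literature; the paper gives no argument for it either (compare the bare assertion of flatness in (I.3.1)).

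Second, your last paragraph does not reach the statement as written, and no argument can, because $F\mapsto\Gamma(U,F)$ does not send $A$-quasi-coherent torsion-free modules to modules of finite type. For instance, $F=\mathcal N_U^{(\mathbb N)}$ is $A$-quasi-coherent and torsion-free, yet $\Gamma(U,F)=N(U)^{(\mathbb N)}$. The reason is that, by analyticity and since $U$ has finitely many connected components, a global section has only finitely many nonzero components. What your argument actually yields is two corrected statements:
\begin{itemize}
\item $A$-coherent torsion-free $\mathcal N_U$-modules $\simeq$ finitely generated torsion-free $N(U)$-modules, from the finitely presented case of (I.4) plus Noetherianity;
\item granting the ``resp.\ quasi-coherent'' case of (I.4), $A$-quasi-coherent torsion-free $\mathcal N_U$-modules $\simeq$ all torsion-free $N(U)$-modules.
\end{itemize}
State explicitly that this is what you prove, rather than ``restricting'' to the $A$-coherent side. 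Two further cautions apply. The paper's proof of (I.4) only treats the finitely presented case, so the second statement rests on an unproved part of (I.4). Also, finite generation of $\Gamma(U,F)$ for $A$-coherent $F$ does not follow from the presentation $\mathcal N_U^m\to\mathcal N_U^n\to F\to0$ alone, since $\Gamma(U,-)$ is not right exact a priori. It comes from the identification $\Gamma(U,\varphi^*\widetilde M)\cong M$ that (I.4) provides.
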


\begin{cor}[I.4.2]
Let $V$ be a nonsingular algebraic variety, $N(V)$ the ring of NASH functions on $V$, and $\mathcal{N}_V$ the sheaf of NASH functions on $V$. Then the functor
\[
F \longmapsto \Gamma(V, F)
\]
is an equivalence between the category of $\mathcal{N}_V$–modules that are $A$–quasi–coherent, torsion–free, and the category of $N(V)$–modules that are quasi–coherent, torsion–free, and of finite type.
\end{cor}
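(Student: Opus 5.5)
Corollary (I.4.2) is the specialization of Theorem (I.4) to $X = (V,\mathcal{N}_V)$, and we follow the same route as for (I.4.1). Put $B = N(V) = \Gamma(V,\mathcal{N}_V)$, $Y = \operatorname{Spec}(B)$, and let $\varphi : V \to Y$ be the canonical morphism of locally ringed spaces. Once the three hypotheses of (I.4) are checked, (I.4) says that $G \mapsto \varphi^*G$ is an equivalence between torsion-free quasi-coherent $\mathcal{O}_Y$-modules of finite presentation (finitely presented torsion-free $B$-modules) and torsion-free $A$-coherent $\mathcal{N}_V$-modules. In the $A$-quasi-coherent setting we pass to filtered inductive limits as in (I.2.3). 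The proof of (I.4) shows that every such $F$ satisfies $F \cong \varphi^*\Gamma(V,F)^{\sim}$. Hence the functor $F \mapsto \Gamma(V,F)$ is a quasi-inverse, since $\Gamma(V,\varphi^*G) \cong \Gamma(Y,G)$ by (I.2.2)(ii).

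\emph{Hypothesis (2): $B$ is a finite product of domains.} We use that $N(V)$ is Noetherian for nonsingular $V$, by the same Nash-function results invoked in (I.3.1). A nonsingular algebraic set has finitely many connected components $V_1,\dots,V_r$, each semi-algebraic, so $N(V) = \prod_i N(V_i)$. On a connected nonsingular $V_i$, a Nash function is analytic. If $fg = 0$ with $f \neq 0$, then $f$ is nonzero on a dense open set, so $g$ vanishes there, and by the identity principle $g = 0$. Thus each $N(V_i)$ is a domain.

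\emph{Hypothesis (3): $\varphi(V) \supseteq \operatorname{Max}(B)$.} We must show that every maximal ideal $\mathfrak m$ of $N(V)$ equals $\mathfrak m_x = \{f : f(x)=0\}$ for some $x \in V$. Since $B$ is Noetherian, $\mathfrak m = (f_1,\dots,f_k)$. If the $f_i$ had no common zero, then $h = \sum f_i^2$ would be a nowhere-vanishing Nash function in $\mathfrak m$. But $1/h$ is Nash, so $h$ is a unit, a contradiction. Hence some $x$ lies in the common zero locus, $\mathfrak m \subseteq \mathfrak m_x$, and maximality forces equality. This trick uses that the base field is $\mathbb{R}$ and is the same fact used at the end of (I.3.1).

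\emph{Hypothesis (1): flatness of $\varphi$.} This is the step I expect to be the main obstacle. For $x \in V$, the local ring $\mathcal{N}_{V,x}$ must be flat over $B_{\mathfrak m_x}$. I would try to show that both rings have the same completion, namely the formal power series ring $\mathbb{R}[[t_1,\dots,t_d]]$ in local coordinates at $x$, using Nash charts given by nonsingularity. Since $B_{\mathfrak m_x} \to \mathcal{N}_{V,x}$ is a local homomorphism of Noetherian local rings inducing an isomorphism on completions, it is flat, indeed faithfully flat. The delicate point is identifying $\widehat{B_{\mathfrak m_x}}$ with $\mathbb{R}[[t]]$. This requires $\mathfrak m_x B_{\mathfrak m_x}$ to be generated by $d$ global Nash functions that form local coordinates at $x$, and this global generation is what must be justified. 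For an open semi-algebraic subset of $\mathbb{R}^n$, as in (I.4.1), the coordinate functions suffice. For general $V$, I would embed $V$ in $\mathbb{R}^n$ and use a Nash tubular neighbourhood retraction, which reduces the question to the open case.
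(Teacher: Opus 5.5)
Your route is the paper's: (I.4.2) is stated there without a separate proof, as the specialization of Theorem (I.4) to $X=(V,\mathcal N_V)$, with the Nash-theoretic inputs (Noetherianity of $N(V)$, flatness of $\varphi$) taken as known exactly as in the proof of (I.3.1), so your verifications of hypotheses (2) and (3) make explicit what the paper leaves implicit. The flatness step you leave open is likewise only assumed by the paper; note, however, that even your ``open case'' is not formal. Dividing by the $x_i-a_i$ with Nash coefficients (Hadamard's lemma) works only locally or on special domains such as $\mathbb R^n$, so showing that the coordinates generate $\mathfrak m_x N(U)_{\mathfrak m_x}$ still requires a genuine global result on Nash functions.
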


\begin{cor}[I.4.3]
Under the assumptions of (I.4), if $F$ is an $\mathcal{O}_X$–module, $A$–quasi–coherent and flat, then $\Gamma(X, F)$ is a flat $\Gamma(X, \mathcal{O}_X)$–module.
\end{cor}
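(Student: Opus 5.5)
We reduce flatness of $M=\Gamma(X,F)$ over $B=\Gamma(X,\mathcal O_X)$ to an injectivity statement for $I\otimes_B M\to M$, where $I$ runs over finitely generated ideals of $B$. The argument imitates the proof of (I.3), with the flat module $F$ playing the role of $\mathcal O_X$.

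First we write $F$ as a pullback. Since $F$ is $A$–quasi–coherent, Theorem (I.1) gives a quasi-coherent $\mathcal O_Y$–module $G=\widetilde{M'}$ with $F\cong\varphi^*G$, where $M'$ is a cokernel of $B^{(I_1)}\to B^{(I_2)}$. We want $M'\cong M$. To get this, we write $F$ as a filtered inductive limit of $A$–coherent modules by Corollary (I.2.3), namely $F\cong\varinjlim\varphi^*G_\alpha$ with $G_\alpha$ of finite presentation. The key input is that, under hypotheses (1) and (3) of (I.4), the map $\Gamma(Y,G_\alpha)\to\Gamma(X,\varphi^*G_\alpha)$ is an isomorphism; this is the implication (3)$\Rightarrow$(ii) of Corollary (I.2.2). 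We expect this step to be the main obstacle. One must check that (I.2.2) applies without Noetherian hypotheses, or fall back on the torsion-free argument of (I.4) after reducing to the case where $B$ is a domain. We must also check that $\Gamma(X,-)$ commutes with the filtered limit. We plan to bypass the latter by working only with the finitely presented modules $I\otimes G$ below, where the comparison isomorphism is available directly.

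Next, flatness of $M$ amounts to injectivity of $I\otimes_B M\to M$ for every finitely generated ideal $I\subset B$. Choose a presentation $B^m\to B^n\to I\to0$ and pull it back along $\varphi$. Because $\varphi$ is flat, $\varphi^*\widetilde I\to\mathcal O_X$ is injective, exactly as in the proof of (I.3). Tensoring with the flat $\mathcal O_X$–module $F$ then shows that
\[
\varphi^*\widetilde I\otimes_{\mathcal O_X}F\longrightarrow F
\]
is injective. Moreover, $\varphi^*\widetilde I\otimes F\cong\varphi^*(\widetilde I\otimes G)=\varphi^*(\widetilde{I\otimes_B M})$.

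Finally, we take global sections. Since $\Gamma$ is left exact, $\Gamma(X,\varphi^*(\widetilde{I\otimes M}))\to\Gamma(X,F)=M$ is injective. It remains to identify the source with $I\otimes_B M$. For this, write $M=\varinjlim M_\alpha$ with each $M_\alpha$ finitely presented, so that $I\otimes M_\alpha$ is finitely presented. Corollary (I.2.2)(ii) then identifies $\Gamma(X,\varphi^*\widetilde{I\otimes M_\alpha})$ with $I\otimes M_\alpha$. Passing to the limit gives the identification, and hence injectivity of $I\otimes_B M\to M$, so $M$ is flat over $B$. If the commutation of $\Gamma$ with the limit fails on a general ringed space, we will instead check injectivity on each element of $I\otimes M$. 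Any such element comes from some $I\otimes M_\alpha$, and there the finitely presented comparison applies.
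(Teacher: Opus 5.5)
\textbf{There is a genuine gap.} Your reduction to the ideal criterion is sound up to the point where you take global sections, including the pullback of $\widetilde I\hookrightarrow\mathcal O_Y$ and the tensoring with the flat module $F$. But both comparisons between $B$-modules and global sections of pulled-back sheaves go through (I.2.2)(ii) and (I.2.3), and these cannot carry the weight:
\begin{enumerate}
\item[(a)] They are stated under the hypotheses of (I.2) (Noetherian or coherent, exactness of $\Gamma$), which (I.4) does not provide.
\item[(b)] The argument offered there for $(3)\Rightarrow$(ii) treats $\Gamma(X,-)$ as right exact.
\item[(c)] Surjectivity of $\Gamma(Y,G)\to\Gamma(X,\varphi^*G)$ genuinely fails under (1) and (3) alone. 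Take the affine line $X$ over a field $k$ with doubled origin. Then $\Gamma(X,\mathcal O_X)=k[t]$ and $\varphi$ is flat and surjective, yet $\Gamma(X,\varphi^*\widetilde{k[t]/(t)})=k^2$.
\end{enumerate}
Passing to the limit also needs $\Gamma(X,-)$ to commute with filtered colimits. This fails for non-quasi-compact $X$, such as the open semi-algebraic sets the paper has in mind. Your element-wise fallback is circular: knowing that the image of $z_\alpha\in I\otimes_B M_\alpha$ dies in $\Gamma(X,\varphi^*(\widetilde{I\otimes_B M}))$ tells you nothing about $z_\alpha$ in any $I\otimes_B M_\beta$ without that same commutation. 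So neither the identification $F\cong\varphi^*\widetilde{\Gamma(X,F)}$ nor the final comparison is established.

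\textbf{The missing idea.} You only need injectivity of $P\to\Gamma(X,\varphi^*\widetilde P)$ for $P=I\otimes_B M$, and this holds for every $B$-module $P$. Suppose $p$ maps to $0$. For each $x$ with $\mathfrak q=\varphi(x)$, the germ $p_{\mathfrak q}\otimes 1\in P_{\mathfrak q}\otimes_{B_{\mathfrak q}}\mathcal O_{X,x}$ vanishes. The flat local map $B_{\mathfrak q}\to\mathcal O_{X,x}$ is faithfully flat, so $p_{\mathfrak q}=0$. By hypothesis (3) these $\mathfrak q$ exhaust the maximal ideals, so $p=0$. With this, your chain $I\otimes_B M\hookrightarrow\Gamma(X,\varphi^*(\widetilde{I\otimes_B M}))\hookrightarrow\Gamma(X,F)$ does give flatness.

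\textbf{Comparison with the paper.} The same observation makes your detour superfluous, and it is exactly the paper's argument. Since $F_x\cong M_{\mathfrak q}\otimes_{B_{\mathfrak q}}\mathcal O_{X,x}$ is flat over the faithfully flat extension $\mathcal O_{X,x}$ of $B_{\mathfrak q}$, the module $M_{\mathfrak q}$ is flat, and (3) gives flatness at every maximal ideal.

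\textbf{The identification step.} You leave $F\cong\varphi^*\widetilde{\Gamma(X,F)}$ open; the paper takes it from (I.4) and its proof, not from (I.2.2) or (I.2.3). This applies because $F$ is torsion-free once $B$ is reduced to a domain. Indeed, $\varphi$ flat makes each nonzero $b\in B$ a nonzerodivisor on $\mathcal O_{X,x}$, and $F$ flat then makes it one on $F_x$. You should commit to that route and record this justification.
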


\begin{proof}
Let $G = \Gamma(X, F)^{\sim}$ be the sheaf on $Y$ associated to the $B$–module $\Gamma(X, F)$. By (I.4) and its proof, $F \cong \varphi^* G$. Since $G$ is a flat $\mathcal{O}_Y$–module by hypotheses (1) and (3), it follows that $\Gamma(Y, G) = \Gamma(X, F)$ is a flat $B$–module.
\end{proof}

\begin{cor}[I.4.4]
Under the assumptions of (I.4.1), if $F$ is an $\mathcal{N}_U$–module, $A$–quasi–coherent and flat, then $\Gamma(U, F)$ is a flat $N(U)$–module.
\end{cor}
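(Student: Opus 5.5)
The plan is to specialize Corollary (I.4.3) to $X=(U,\mathcal N_U)$, $B=N(U)$, $Y=\operatorname{Spec}(N(U))$, exactly as Corollary (I.4.1) specializes Theorem (I.4). The task is therefore to check that the three hypotheses of (I.4) hold for the canonical morphism $\varphi:(U,\mathcal N_U)\to \operatorname{Spec}(N(U))$. Once they do, (I.4.3) applied to an $A$-quasi-coherent flat $\mathcal N_U$-module $F$ gives directly that $\Gamma(U,F)$ is flat over $N(U)$.

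I would verify the hypotheses in this order.

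\textbf{Hypothesis (2).} $N(U)$ is Noetherian, as used in the proof of (I.3.1). A semi-algebraic open set has finitely many semi-algebraically connected components $U_1,\dots,U_r$, so $N(U)=\prod_i N(U_i)$. Each $N(U_i)$ is an integral domain because Nash functions are analytic, and an analytic function vanishing on a nonempty open subset of a connected set vanishes identically. This reduces everything to the case $U$ connected, $N(U)$ a domain, matching the first line of the proof of (I.4).

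\textbf{Hypothesis (3).} Every maximal ideal of $N(U)$ should be of the form $\mathfrak m_x=\{g : g(x)=0\}$ for some $x\in U$. This is the Nash analogue of the fact invoked at the end of the proof of (I.3.1), that maximal ideals of $N(U)$ are traces of points. For an ideal $\mathfrak m$ with no common zero, I would pick finitely many generators $g_1,\dots,g_k$ (Noetherianity). Then $\sum g_i^2$ is a nowhere-vanishing Nash function, hence a unit in $N(U)$, a contradiction. Since $\varphi(x)=\mathfrak m_x$, this gives $\varphi(U)\supseteq\mathrm{Max}(N(U))$.

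\textbf{Hypothesis (1).} Flatness of $\varphi$ means that for $x\in U$, the stalk $\mathcal N_{U,x}$ is flat over $N(U)_{\mathfrak m_x}$. Both rings are Noetherian local, and they have the same completion, namely $\mathbb R[[x_1-a_1,\dots,x_n-a_n]]$. So I would deduce flatness from the faithful flatness of each ring into the common completion: a finitely generated module over $N(U)_{\mathfrak m_x}$ can then be tested after completion.

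I expect this flatness step to be the main obstacle. It requires the Artin-type approximation fact that $N(U)_{\mathfrak m_x}$ and $\mathcal N_{U,x}$ share the completion $\widehat{\mathcal O}_{\mathbb R^n,x}$. If that is not directly available, I would instead argue as in (I.3.1), citing the flatness of $(U,\mathcal O_U)\to\operatorname{Spec}N(U)$ and factoring through $\mathcal N_U$ via faithful flatness of $\mathcal N_{U,x}\to\mathcal O_{U,x}$.

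With (1)–(3) established, (I.4.3) applies verbatim: $F\cong\varphi^*G$ with $G=\Gamma(U,F)^\sim$ flat, so $\Gamma(U,F)$ is a flat $N(U)$-module.
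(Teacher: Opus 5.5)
Your proposal is correct and takes the paper's route. The paper gives no separate proof of (I.4.4); it is simply (I.4.3) applied to $X=(U,\mathcal N_U)$ and $B=N(U)$, just as (I.4.1) specializes (I.4). Your explicit check of hypotheses (1)--(3) of (I.4) for the Nash sheaf supplies what the paper leaves unstated: the identity principle on the finitely many connected components, the sum-of-squares argument via Noetherianity, and flatness either through the common completion or by descent from $\mathcal O_{U,x}$ using (I.3.1).
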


\begin{cor}[I.4.5]
Under the assumptions of (I.4.2), if $F$ is an $\mathcal{N}_V$–module, $A$–quasi–coherent and flat, then $\Gamma(V, F)$ is a flat $N(V)$–module.
\end{cor}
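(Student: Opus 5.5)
The statement to prove is Corollary (I.4.5). I would deduce it from Corollary (I.4.3), exactly as (I.4.4) is deduced from (I.4.1), by checking that $X=(V,\mathcal{N}_V)$ with $B=N(V)$ satisfies the three hypotheses of Theorem (I.4). Here $\varphi\colon V\to Y=\operatorname{Spec}(N(V))$ is the canonical morphism. Once (1)--(3) of (I.4) hold, (I.4.3) says: for an $A$-quasi-coherent flat $\mathcal{N}_V$-module $F$, the $N(V)$-module $\Gamma(V,F)$ is flat. This is precisely the claim. I would also record the identification used in (I.4.3): $F\cong\varphi^*\Gamma(V,F)^{\sim}$, via (I.4.2).

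Order of steps.

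\emph{Hypothesis (2).} $V$ is a nonsingular algebraic variety. It has finitely many connected components $V_1,\dots,V_r$, which are semi-algebraic. Hence $N(V)=\prod_i N(V_i)$. Each $N(V_i)$ is an integral domain: a Nash function on a connected Nash manifold that vanishes on an open set vanishes identically, by analytic continuation. A product of two nonzero Nash functions vanishes on $V_i$ only if one of them vanishes on an open subset.

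\emph{Hypothesis (3).} Every maximal ideal of $N(V)$ should be of the form $\mathfrak{m}_x=\{g: g(x)=0\}$ for some $x\in V$, so that $\varphi(V)\supseteq\mathrm{Max}(Y)$. The standard argument goes as follows. Suppose an ideal $\mathfrak{a}$ has no common zero. By Noetherianity, $\mathfrak{a}$ is finitely generated by $g_1,\dots,g_k$. Then $\sum g_j^2$ is a Nash function without zeros, hence a unit, so $\mathfrak{a}=N(V)$. This uses that $V$ is closed in $\mathbb{R}^n$, so no points "at infinity" occur for a nowhere-vanishing sum of squares.

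\emph{Hypothesis (1), flatness of $\varphi$.} For $x\in V$, the stalk $\mathcal{N}_{V,x}$ should be flat over $N(V)_{\mathfrak{m}_x}$. The map $N(V)_{\mathfrak{m}_x}\to\mathcal{N}_{V,x}$ is a local homomorphism of Noetherian local rings. Both rings have the same completion, the formal power series ring $\mathbb{R}[[t_1,\dots,t_d]]$ with $d=\dim_x V$, by nonsingularity and Artin approximation for Nash germs. Flatness of the completion map then gives flatness by the standard faithfully-flat-completion criterion. This is the same input already used for (I.3.2), so I would cite it rather than reprove it.

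The main obstacle is hypothesis (1), together with the Noetherianity of $N(V)$ invoked in (3). These are genuine theorems of Nash function theory (Efroymson, Mostowski, Risler), not formal consequences of the paper. I would invoke them exactly as the proof of (I.3.1) invokes Noetherianity and flatness. With (1)--(3) in hand, (I.4.3) applies verbatim and yields the flatness of $\Gamma(V,F)$ over $N(V)$.
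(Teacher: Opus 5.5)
Your proposal is correct and follows the route the paper intends: (I.4.5) is (I.4.3) applied to $X=(V,\mathcal{N}_V)$, $B=N(V)$, just as (I.4.4) comes from (I.4.1). Your check of hypotheses (1)--(3) of (I.4) only makes explicit what the paper tacitly assumes when stating (I.4.2), and it relies on the same external Nash-theoretic inputs, namely Noetherianity of $N(V)$ and flatness of $\varphi$. One small correction: closedness of $V$ plays no role in your argument for hypothesis (3), since the reciprocal of a nowhere-vanishing Nash function is Nash on any Nash manifold, so the same argument works for an open semi-algebraic $U$, as in (I.3.1).
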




\end{document}